\newtheorem{theorem}[subsection]{Theorem}
\newtheorem{thm}[subsection]{Theorem}
\newtheorem{defn}[subsection]{Definition}
\newtheorem{prop}[subsection]{Proposition}
\newtheorem{remark}[subsection]{Remark}
\theoremstyle{definition}
\newtheorem{example}[subsection]{Example}
\newcommand{\R}{\mathbb R}
\newcommand{\Q}{\mathbb Q}
\newcommand{\Z}{\mathbb Z}
\newcommand{\N}{\mathbb N}
\newcommand{\C}{\mathbb C}
\newcommand{\CP}{{\mathbb P}}
\DeclareMathOperator{\Diff}{Diff}
\DeclareMathOperator{\Ham}{Ham}
\DeclareMathOperator{\Hess}{Hess}
\DeclareMathOperator{\Det}{Det}
\DeclareMathOperator{\End}{End}
\DeclareMathOperator{\diag}{diag}
\DeclareMathOperator{\Id}{Id}
\begin{document}

\title[Toric 	K\"ahler Metrics]
{Toric 	K\"ahler Metrics: Cohomogeneity One Examples of Constant 
Scalar Curvature in Action-Angle Coordinates}
\author{Miguel Abreu}
\begin{thanks}
{Partially supported by the Funda\c{c}\~{a}o para a Ci\^{e}ncia e a Tecnologia
(FCT/Portugal).}
\end{thanks}
\address{Centro de An\'{a}lise Matem\'{a}tica, Geometria e Sistemas
Din\^{a}micos, Departamento de Matem\'atica, Instituto Superior T\'ecnico, Av.
Rovisco Pais, 1049-001 Lisboa, Portugal}.
\email{mabreu@math.ist.utl.pt}

\begin{abstract} 
In these notes, after an introduction to toric K\"ahler geometry, 
we present Calabi's family of $U(n)$-invariant extremal K\"ahler metrics
in symplectic action-angle coordinates and show that it actually contains,
as particular cases, many interesting cohomogeneity one examples of
constant scalar curvature.
\end{abstract}

\maketitle

\section{Introduction} \label{s:intro}

In 1982 Calabi~\cite{C2} constructed, in local complex coordinates, a general
$4$-parameter family of $U(n)$-invariant extremal K\"ahler metrics, which he
used to put an extremal K\"ahler metric on
\[
H^n_m := \CP ({\mathcal O}(-m)\oplus\C) \to \CP^{n-1}\,,
\]
for all $n,m\in\N$ and any possible K\"ahler cohomology class. In particular,
when $n=2$, on all Hirzebruch surfaces.

The main goal of these notes is to present Calabi's general family in local 
symplectic action-angle coordinates, using the set-up of~\cite{Abr1, Abr2} 
for toric K\"ahler geometry, and show that it actually
contains other interesting cohomogeneity one K\"ahler metrics as particular 
cases (see also~\cite{Ra}).
These include:
\begin{itemize}
\item[-] the Fubini-Study, flat and Bergman K\"ahler-Einstein metrics of
constant holomorphic sectional curvature (positive, zero and negative, resp.).
\item[-] the complete Ricci flat K\"ahler metric on the total space of
\[
{\mathcal O}(-n) \to \CP^{n-1}\,,
\]
for all $n\in\N$  and any possible K\"ahler cohomology class, constructed by
Calabi~\cite{C1} in 1979.
\item[-] the complete scalar flat K\"ahler metric on the total space of
\[
{\mathcal O}(-m) \to \CP^{n-1}\,,
\]
for all $m,n\in\N$ and any possible K\"ahler cohomology class, constructed for
$n=2$ by LeBrun~\cite{LeB} in 1988 and for $n>2$ by Pedersen-Poon~\cite{PePo} 
in 1991 (see also Simanca~\cite{Si}).
\item[-] the complete K\"ahler-Einstein metric with negative scalar 
curvature on the total space of the open disc bundle 
\[
{\mathcal D}(-m) \subset {\mathcal O}(-m)\longrightarrow \CP^{n-1}\,,
\]
for all $n<m\in\N$ and any possible K\"ahler cohomology class, constructed by 
Pedersen-Poon~\cite{PePo}.
\item[-] the complete constant negative scalar curvature K\"ahler metric on 
the total space of the open disc bundle 
\[
{\mathcal D}(-m) \subset {\mathcal O}(-m)\longrightarrow \CP^{n-1}\,,
\]
for all $n,m\in\N$ and any possible K\"ahler cohomology class, also 
constructed by Pedersen-Poon~\cite{PePo}.
\end{itemize}

Calabi's general family contains many other interesting cohomogeneity one special
K\"ahler metrics. Besides the Bochner-K\"ahler orbifold examples presented in~\cite{Abr3}, 
it contains for example a family of singular K\"ahler-Einstein metrics on certain 
$H^n_m$ that are directly related to the Sasaki-Einstein metrics constructed by Gauntlett-Martelli-Sparks-Waldram~\cite{GW1,GW2} in 2004 - see~\cite{Abr4}.

These notes are organized as follows. In section~\ref{s:troicsympmflds} we
give a basic introduction to symplectic geometry and discuss 
some fundamental features of toric symplectic manifolds.
Section~\ref{s:toricKmetrics} is devoted to toric K\"ahler metrics. After some
relevant linear algebra prelimaries, we explain how these can be parametrized
in action-angle coordinates via symplectic potentials for the associated
toric compatible complex structures, and discuss some important properties of
these symplectic potentials. In section~\ref{s:2d-metrics} we write down the 
symplectic potentials that give rise to toric constant (scalar) curvature metrics 
in real dimension $2$ and identify the underlying toric symplectic surfaces. 
This is a warm-up for section~\ref{s:calabi}, where we discuss in detail 
the above higher dimensional examples that arise in Calabi's general family of 
local $U(n)$-invariant extremal K\"ahler metrics.

\subsection{Acknowledgements} I thank the organizers of the 
XI International Conference on Geometry, Integrability and Quantization, 
Varna, Bulgaria, June 5--10, 2009, where this material was presented as 
part of a mini-course.

The work presented in Section~\ref{s:calabi} was carried out in January-June 
of 2001, while I was at the Fields Institute for Research in Mathematical Sciences,
Toronto, Canada. It was presented in seminar talks given at the Workshop on 
Hamiltonian Group Actions and Quantization, Fields Institute, June 4--13, 2001, 
and at Differentialgeometrie im Gro{\ss}en, Mathematisches Forschungsinsitut 
Oberwolfach, Germany, June 10--16, 2001. I thank the support and hospitality of
the Fields Institute and the organizers of those meetings.

\section{Toric Symplectic Manifolds} \label{s:troicsympmflds}

In this section we give a basic introduction to symplectic geometry and discuss 
some fundamental features of toric symplectic manifolds.

\subsection{Symplectic Manifolds}

\begin{defn} \label{def:sympmfld}
A \emph{symplectic manifold} is a pair $(B,\omega)$ where $B$ is a smooth
manifold and $\omega$ is a closed and non-degenerate $2$-form, i.e.
\begin{itemize}
\item[(i)] $\omega\in\Omega^2(B)$ is such that $d\omega = 0$ and
\item[(ii)] for any $p\in B$ and $0\ne X\in T_p B$, there exists $Y\in
  T_p B$ such that $\omega_p (X,Y) \ne 0$.
\end{itemize}
\end{defn}
The non-degeneracy condition (ii) implies that a symplectic manifold is 
always even dimensional. If $B$ has dimension $2n$, the non-degeneracy 
condition (ii) is equivalent to requiring that
\[
\omega^n \equiv \omega\wedge\cdots\wedge\omega \in \Omega^{2n}(B)\ \text{is a
  volume form.}
\]
Hence, a symplectic manifold $(B,\omega)$ is always oriented.

\begin{example}\label{ex:R2n}
The most basic example is $\R^{2n}$ with linear coordinates 
\[
(u_1, \ldots, u_n, v_1, \ldots, v_n)
\] 
and symplectic form 
\[
\omega_{\rm st} = d u \wedge d v := \sum_{j=1}^n d u_j \wedge d v_j\,.
\]
\end{example}

\begin{example} \label{ex:2d}
Any $2$-dimensional surface equipped with an area form is a symplectic 
manifold. For example, the sphere $S^2$ or any other compact
orientable surface $\Sigma_g$ of genus $g$.
\end{example}

\begin{example} \label{ex:prod}
If $(B_1,\omega_1)$ and $(B_2,\omega_2)$ are symplectic manifolds, then
\[
(B=B_1\times B_2, \omega = \omega_1\times\omega_2)
\] 
is also a symplectic manifold. Here, $\omega_1\times\omega_2$ means the sum of 
the pullbacks of the symplectic forms $\omega_1$ and $\omega_2$ from the factors 
$B_1$ and $B_2$. 
\end{example}

\begin{example} \label{ex:kahler}
The imaginary part of the hermitean metric on any K\"ahler manifold is
a symplectic form. Hence, any K\"ahler manifold is a symplectic
manifold. In particular, the complex projective space $\CP^n$ equipped
with its Fubini-Study form $\omega_{FS}$ is a symplectic manifold.
\end{example}

When $(B,\omega)$ is a compact symplectic manifold we have that
\[
\omega^n = \text{volume form} \Rightarrow 0\ne [\omega^n]\in H^{2n}(B,\R) 
\Rightarrow 0\ne [\omega]\in H^2(B,\R)\,.
\]
In particular, the spheres ${\mathbb S}^{2n}$ have no symplectic form when
$n>1$, since 
\[
H^2({\mathbb S}^{2n}, \R) = 0 \quad\text{when}\quad n>1\,.
\]

\subsection{Symplectomorphisms and Darboux's Theorem}

\begin{defn} \label{def:sympdiff}
Let $(B,\omega)$ be a symplectic manifold. A \emph{symplectomorphism} of
$B$ is a diffeomorphism $\varphi:B\to B$ such that
$\varphi^\ast(\omega)=\omega$. These form the \emph{symplectomorphism group},
a subgroup of $\Diff(B)$ that will be denoted by $\Diff(B,\omega)$.
\end{defn}

\begin{example} \label{ex:hamdiff}
Consider a symplectic manifold $(B,\omega)$ and let $h:B\to\R$ be a
smooth function on $B$. The non-degeneracy of $\omega$ implies that there
exists a unique vector field $X_h \in{\mathcal X}(B)$ such that $X_h
\lrcorner\,\omega = d h$. This vector field $X_h$ is called the 
\emph{Hamiltonian vector field} of the function $h$ 
and has the following fundamental property:
\[
\text{the flow $\varphi_t\equiv (X_h)_t : B\to B$ consists of
  symplectomorphisms of $B$.}
\]
This can be proved using Cartan's formula to compute
\[
{\mathcal L}_{X_h}\omega = X_h \lrcorner\,d\omega + d (X_h \lrcorner\,\omega)
= X_h \lrcorner\, 0 + d(d h) = 0\,.
\]
Hence, on a symplectic manifold $(B,\omega)$ any smooth function $h\in
C^\infty(B)$ gives rise, through the flow of the corresponding
Hamiltonian vector field $X_h\in{\mathcal X}(B)$, to a $1$-parameter group of
symplectomorphisms.
\end{example}

One can use the symplectomorphisms constructed in the previous example
to prove that:
\begin{itemize}
\item[(i)] the symplectomorphism group $\Diff(B,\omega)$ is always
  infinite-dimensional;
\item[(ii)] the action of $\Diff(B,\omega)$ on the manifold $B$ is always
  $k$-transitive, for any $k\in\N$;
\item[(iii)] in particular, any point of a symplectic manifold
  $(B,\omega)$ looks locally like any other point of $(B,\omega)$.
\end{itemize}
This last statement is made more precise in the following
\begin{thm} \emph{[Darboux]} Let $(B, \omega)$ be a symplectic manifold
  of dimension $2n$. Then, any point $p\in B$ has a neighborhood
  $U\subset B$ symplectomorphic to a neighborhood $V$ of the origin in
  $(\R^{2n}, \omega_{\rm st})$, i.e. there exists a diffeomorphism
\[
\text{$\phi:U\subset B \to
  V\subset\R^{2n}$ such that $\phi(p) = 0$ and $\phi^\ast(\omega_{\rm st}) =
  \omega$.} 
\]
\end{thm}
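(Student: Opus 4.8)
The plan is to prove Darboux's theorem using Moser's trick, which reduces the statement to solving a simple linear-algebraic problem pointwise and then integrating a time-dependent vector field. First I would work in a local chart to assume $B = \R^{2n}$ with $p = 0$; at the single point $0$ one applies the standard linear symplectic normal form for the alternating bilinear form $\omega_0$ on $T_0\R^{2n} \cong \R^{2n}$, so after a linear change of coordinates we may assume $\omega_0 = (\omega_{\rm st})_0$. It then remains to connect $\omega$ to $\omega_{\rm st}$ near the origin.

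Next I would set $\omega_t := (1-t)\,\omega_{\rm st} + t\,\omega$ for $t \in [0,1]$ and observe that each $\omega_t$ is closed, and that $(\omega_t)_0 = (\omega_{\rm st})_0$ is non-degenerate; by openness of the non-degeneracy condition there is a neighborhood of $0$ on which $\omega_t$ is non-degenerate for all $t\in[0,1]$ simultaneously. Since $\omega - \omega_{\rm st}$ is closed and we may shrink to a ball (hence contractible), the Poincar\'e lemma gives a $1$-form $\sigma$ with $d\sigma = \omega - \omega_{\rm st}$, which we can moreover arrange to vanish at $0$ by subtracting a constant-coefficient $1$-form (whose differential is zero). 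Using non-degeneracy of $\omega_t$, define the time-dependent vector field $X_t$ by $X_t \lrcorner\, \omega_t = -\sigma$; since $\sigma_0 = 0$ we get $X_t(0) = 0$ for all $t$, so the flow $\psi_t$ of $X_t$ fixes the origin and is defined for $t \in [0,1]$ on a possibly smaller neighborhood of $0$.

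The final step is the Moser computation: using Cartan's formula (exactly as in Example~\ref{ex:hamdiff}) together with $\frac{d}{dt}\omega_t = \omega - \omega_{\rm st} = d\sigma$,
\[
\frac{d}{dt}\bigl(\psi_t^\ast \omega_t\bigr)
= \psi_t^\ast\Bigl({\mathcal L}_{X_t}\omega_t + \frac{d}{dt}\omega_t\Bigr)
= \psi_t^\ast\bigl(d(X_t\lrcorner\,\omega_t) + d\sigma\bigr)
= \psi_t^\ast\bigl(-d\sigma + d\sigma\bigr) = 0\,,
\]
so $\psi_t^\ast\omega_t$ is independent of $t$; evaluating at $t=0$ and $t=1$ gives $\psi_1^\ast\omega = \omega_{\rm st}$ near $0$. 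Composing the linear normalization with $\psi_1$ and restricting to a small enough neighborhood $U$ yields the desired $\phi$ with $\phi(0)=0$ and $\phi^\ast(\omega_{\rm st})=\omega$. The main obstacle to watch is not any one identity but the bookkeeping of domains: one must shrink the neighborhood several times so that non-degeneracy of $\omega_t$ holds uniformly in $t$, the Poincar\'e lemma applies on a contractible set, and the flow $\psi_t$ exists for all of $t\in[0,1]$; the fact that $X_t$ vanishes at the origin is what makes the last requirement easy to guarantee.
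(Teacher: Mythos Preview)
Your argument via Moser's trick is correct and is the standard modern proof of Darboux's theorem. Note, however, that the paper does not supply a proof of this theorem: it is quoted as a classical background fact in the introductory survey of symplectic geometry, so there is no proof in the paper to compare your approach against. One minor bookkeeping point at the very end: from $\psi_1^\ast\omega = \omega_{\rm st}$ the map you actually want is $\phi = \psi_1^{-1}$ (composed with your chart and the linear change of coordinates), not $\psi_1$ itself, in order to obtain $\phi^\ast(\omega_{\rm st}) = \omega$ in the direction stated in the theorem.
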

In other words,
\[
\text{there are no local invariants in symplectic geometry,}
\]
which is in sharp contrast with what happens, for example, in
Riemannian geometry.

\subsection{Symplectic and Hamiltonian Vector Fields}

The Lie algebra of the symplectomorphism group $\Diff(B,\omega)$, viewed
as an infinite-dimensional Lie group, is naturally identified with the
vector space ${\mathcal X}(B,\omega)$ of \emph{symplectic vector fields}, i.e. vector
fields $X\in{\mathcal X}(B)$ such that ${\mathcal L}_X \omega = 0$, with Lie bracket 
$\left[\cdot,\cdot\right]$ given by the usual Lie bracket of vector
fields. As before, we can use Cartan's formula to obtain
\[
{\mathcal L}_X \omega = X \lrcorner\, d\omega + d(X\lrcorner\,\omega) =
X \lrcorner\,0 + d(X\lrcorner\,\omega) = d(X\lrcorner\,\omega)\,.
\]
Hence, the vector space of symplectic vector fields is given by
\[
{\mathcal X}(B,\omega) = \left\{X\in{\mathcal X}(B)\,:\ \text{the $1$-form
    $X\lrcorner\,\omega$ is closed} \right\}\,,
\]
while its subspace of \emph{Hamiltonian vector fields} is given by
\[
{\mathcal X}_H (B,\omega) = \left\{X\in{\mathcal X}(B)\,:\ \text{the $1$-form
    $X\lrcorner\,\omega$ is exact} \right\}\,.
\]
In fact, as the following theorem shows, ${\mathcal X}_H (B,\omega)$ is a Lie
subalgebra of ${\mathcal X}(B,\omega)$.
\begin{thm} \label{thm:hamvf}
If $X,Y\in{\mathcal X}(B,\omega)$ are symplectic vector fields, then
$\left[X,Y\right]$ is the Hamiltonian vector field of the function
$\omega(Y,X) : B\to\R$, i.e.
\[
\left[X,Y\right] = X_{\omega(Y,X)} \in {\mathcal X}_H(B,\omega)\,.
\]
\end{thm}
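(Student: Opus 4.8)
The plan is to compute the contraction $[X,Y]\lrcorner\,\omega$ directly and show it equals $d\bigl(\omega(Y,X)\bigr)$; by the characterization of Hamiltonian vector fields recalled above (the $1$-form $Z\lrcorner\,\omega$ is exact precisely when $Z\in{\mathcal X}_H(B,\omega)$, and $\omega$ is non-degenerate so such a $Z$ is unique), this identifies $[X,Y]$ as $X_{\omega(Y,X)}$. The essential tool is the Cartan-calculus identity relating the Lie bracket of vector fields to Lie derivatives and contractions, namely ${\mathcal L}_X(Z\lrcorner\,\omega) = ([X,Z])\lrcorner\,\omega + Z\lrcorner\,({\mathcal L}_X\omega)$ for any vector field $Z$ and any form.

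First I would apply this identity with $Z = Y$: since $X$ is symplectic, ${\mathcal L}_X\omega = 0$, so
\[
{\mathcal L}_X(Y\lrcorner\,\omega) = ([X,Y])\lrcorner\,\omega\,.
\]
Next I would rewrite the left-hand side using Cartan's formula ${\mathcal L}_X = X\lrcorner\,d + d\,(X\lrcorner\,\cdot)$ applied to the $1$-form $Y\lrcorner\,\omega$:
\[
{\mathcal L}_X(Y\lrcorner\,\omega) = X\lrcorner\,d(Y\lrcorner\,\omega) + d\bigl(X\lrcorner\,(Y\lrcorner\,\omega)\bigr)\,.
\]
Here $X\lrcorner\,(Y\lrcorner\,\omega) = \omega(Y,X)$ is the function in the statement, so the second term is exactly $d\bigl(\omega(Y,X)\bigr)$. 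It therefore remains to show that the first term vanishes. This uses that $Y$ is symplectic: $d(Y\lrcorner\,\omega) = {\mathcal L}_Y\omega - Y\lrcorner\,d\omega = 0 - 0 = 0$, again by Cartan's formula together with $d\omega = 0$. Hence $X\lrcorner\,d(Y\lrcorner\,\omega) = X\lrcorner\,0 = 0$, and combining the displays gives $([X,Y])\lrcorner\,\omega = d\bigl(\omega(Y,X)\bigr)$, which is the desired conclusion.

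I do not expect any genuine obstacle here: the argument is a formal manipulation with Cartan's formula and the graded commutation of Lie derivative and interior product, both of which are standard. The only point requiring a little care is bookkeeping — one should be consistent about sign conventions in $X_h\lrcorner\,\omega = dh$ versus $\omega(X_h,\cdot) = dh$, since this is what produces $\omega(Y,X)$ rather than $\omega(X,Y)$ in the final formula. A secondary minor point worth stating explicitly is that the identification of $[X,Y]$ with $X_{\omega(Y,X)}$ (and not merely "some" Hamiltonian vector field) relies on the non-degeneracy of $\omega$, which guarantees uniqueness of the vector field representing a given $1$-form. As a byproduct, the computation also shows $[X,Y]$ is Hamiltonian whenever $X$ and $Y$ are symplectic, so ${\mathcal X}_H(B,\omega)$ is indeed an ideal in ${\mathcal X}(B,\omega)$.
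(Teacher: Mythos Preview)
Your proof is correct and follows essentially the same route as the paper's: both start from the identity $[X,Y]\lrcorner\,\omega = {\mathcal L}_X(Y\lrcorner\,\omega) - Y\lrcorner\,({\mathcal L}_X\omega)$, then expand via Cartan's formula and use that $X,Y$ are symplectic and $d\omega=0$ to reduce to $d(\omega(Y,X))$. The only cosmetic difference is that you eliminate the $Y\lrcorner\,({\mathcal L}_X\omega)$ term at the outset using ${\mathcal L}_X\omega=0$, whereas the paper expands both Lie derivatives fully before simplifying.
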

\begin{proof}
It suffices to compute $\left[X,Y\right] \lrcorner\, \omega$, using
standard formulas from differential geometry and the defining
properties of $X$, $Y$ and $\omega$:
\[
\begin{array}{rcl}
[X,Y] \lrcorner\, \omega & = & {\mathcal L}_X (Y\lrcorner\,\omega) - Y\lrcorner\,
({\mathcal L}_X \omega) \\
& = & d(X\lrcorner\,(Y\lrcorner\,\omega)) + X\lrcorner\,(d(Y\lrcorner\,
\omega)) - Y \lrcorner\, (d(X\lrcorner\,\omega)) - Y \lrcorner\,
(X\lrcorner\, d\omega) \\
& = & d(\omega(Y,X))\,.
\end{array}
\]
\end{proof}
\begin{remark} \label{rmk:hamgp}
${\mathcal X}_H(B,\omega)$ is the Lie algebra of a fundamental subgroup of the
symplectomorphism group: the subgroup $\Ham(B,\omega)\subset
\Diff(B,\omega)$ of \emph{Hamiltonian symplectomorphisms} of $(B,\omega)$. It
follows from Theorem~\ref{thm:hamvf} that this Lie algebra can be
naturally identified with the vector space $C^\infty (B) / \R$, 
i.e. smooth functions on $B$ modulo constants, equipped with a bracket 
$\left\{\cdot,\cdot\right\}$ known as the Poisson bracket:
\[
\left\{f,g\right\} \equiv \omega(X_f, X_g)\,.
\]
Note that when $H^1(B, \R) = 0$ we have that ${\mathcal X}_H(B,\omega) =
{\mathcal X}(B,\omega)$. 
\end{remark}

\subsection{Hamiltonian Torus Actions} 

Let $(B,\omega)$ be a symplectic manifold equipped with a symplectic
action of 
\[
{\mathbb T}^m \equiv \R^m/2\pi\Z^m \equiv \R/2\pi\Z \times \cdots \times
\R/2\pi\Z \equiv {\mathbb S}^1 \times \cdots \times {\mathbb S}^1\,,
\]
i.e. with a homomorphism ${\mathbb T}^m \to \Diff(B,\omega)$. Let $X_1, \ldots,
X_m \in {\mathcal X}(B)$ be the vector fields generating the action of each
individual ${\mathbb S}^1$-factor. Then, since the action is symplectic, we have
that 
\[
{\mathcal L}_{X_k} \omega = 0 \Leftrightarrow X_k \lrcorner\,d\omega + d(X_k
\lrcorner\, \omega) = 0 \Leftrightarrow d(X_k
\lrcorner\, \omega) = 0\,,
\]
i.e.
\[
X_k \in {\mathcal X} (B,\omega)\,,\ \forall\, k\in\left\{1,\ldots,m\right\}\,.
\]
\begin{defn} \label{def:hamabaction}
A symplectic ${\mathbb T}^m$-action on a symplectic manifold $(B,\omega)$ is said
to be \emph{Hamiltonian} if for every $k\in\left\{1,\ldots,m\right\}$ there
exists a function $h_k:B\to\R$ such that $X_k\lrcorner\,\omega = d h_k$,
i.e. $X_k\equiv X_{h_k}\in{\mathcal X}_H (B,\omega)$ 
is the Hamiltonian vector field of $h_k$. In
this case, the map $\mu:B\to\R^m$ defined by
\[
\mu(p) = (h_1(p), \ldots, h_m(p))\,,\ \forall\, p\in B\,,
\]
is called a \emph{moment map} for the action.
\end{defn}
\begin{remark}
Suppose $\mu:B\to\R^m$ is a moment map for a Hamiltonian ${\mathbb T}^m$-action on
$(B,\omega)$. Then $\mu + c$, for any given constant $c\in\R^m$, is also
a moment map for that same action.
\end{remark}
\begin{remark} \label{rmk:isotorb}
The orbits of a Hamiltonian ${\mathbb T}^m$-action on a symplectic manifold
$(B, \omega)$ are always \emph{isotropic}, i.e.
\[
\omega|_{\text{orbit}} \equiv 0\,.
\]
In fact, the tangent space to an orbit is generated by the Hamiltonian
vector fields $X_{h_k}$, $k\in\{1,\ldots,m\}$. Using
Theorem~\ref{thm:hamvf} and the fact that the torus ${\mathbb T}^m$ is abelian,
we have that
\[
X_{\omega(X_{h_k}, X_{h_l})} = - \left[X_{h_k},X_{h_l}\right] \equiv 0 
\Rightarrow \omega(X_{h_k}, X_{h_l}) \equiv \text{const.}\,,\ \forall\,
k,l\in \{1,\ldots,m\}\,.
\]
Since ${\mathbb T}^m$ is compact, there is for each $k\in\{1,\ldots,m\}$ and on
each ${\mathbb T}^m$-orbit a point $p_k$ where the function $h_k
|_{\text{orbit}}$ attains its maximum. Then
\[
\omega(X_{h_k}, X_{h_l}) = (d h_k)_{p_k} (X_{h_l}) = 0\,.
\]
Hence, the above constant is actually zero and each ${\mathbb T}^m$-orbit is
indeed isotropic. This fact will be used below, in the proof of
Proposition~\ref{prop:dimtorus}.
\end{remark}
\begin{example}\label{ex:hamR2n}
Consider {$(\R^{2n}, \omega_{\rm st})$}, where
\[
\omega_{\rm st} = d u \wedge d v := \sum_{j=1}^n d u_j \wedge d v_j
\]
as in Example~\ref{ex:R2n}, and its usual identification with $\C^n$ given 
by 
\begin{equation} \label{eq:R2n-Cn}
z_j = u_j + i v_j\,,\ j=1,\ldots, n\,.
\end{equation}
The standard ${\mathbb T}^n$-action {$\tau_{\rm st}$} on $\R^{2n}$, given by
\[
(y_1, \ldots, y_n) \cdot (z_1,\ldots, z_n) = 
(e^{-i y_1}z_1, \ldots, e^{-i y_n}z_n)\,,
\]
is Hamiltonian, with moment map ${\mu_{\rm st}}:\R^{2n} \to \R^n$ given by
\[
\mu_{\rm st} (u_1, \ldots, u_n, v_1, \ldots, v_n) = 
\frac{1}{2} (u_1^2 + v_1^2, \ldots, u_n^2 + v_n^2)\,.
\]
\end{example}
\begin{example} \label{ex:hamPn}
Consider projective space {$(\CP^n, \omega_{\rm FS})$}, with homogeneous 
coordinates $[z_0; z_1; \ldots; z_n]$. 

The ${\mathbb T}^n$-action {$\tau_{\rm FS}$} on $\CP^{n}$ given by
\[
(y_1, \ldots, y_n) \cdot [z_0; z_1;\ldots; z_n] = 
[z_0; e^{-i y_1}z_1; \ldots; e^{-i y_n}z_n]\,,
\]
is Hamiltonian, with moment map ${\mu_{\rm FS}}:\CP^{n} \to \R^n$ given by
\[
\mu_{\rm FS} [z_0; z_1; \ldots; z_n] = 
\frac{1}{\|z\|^2} (\|z_1\|^2, \ldots, \|z_n\|^2)\,.
\] 

Note that the image of $\mu_{\rm FS}$ is the convex hull of the images 
of the $n+1$ fixed points of the action, i.e. the standard simplex in $\R^n$.
\end{example}

Atiyah~\cite{At} and Guillemin-Sternberg~\cite{GS} 
proved in 1982 the following Convexity Theorem.

\begin{thm} \label{thm:ags-convex}
Let $(B,\omega)$ be a compact, connected, symplectic manifold, equipped
with a Hamiltonian ${\mathbb T}^m$-action with moment map $\mu:B\to\R^m$. Then
\begin{itemize}
\item[(i)] the level sets $\mu^{-1}(\lambda)$ of the moment map are
  connected (for any $\lambda\in\R^m$); 
\item[(ii)] the image $\mu(B)\subset\R^m$ of the moment map is the
  convex hull of the images of the fixed points of the action.
\end{itemize}
\end{thm}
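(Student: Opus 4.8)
The plan is to reduce everything to one-variable Morse theory and then induct on $m$. First I would treat the case $m=1$: a single Hamiltonian ${\mathbb S}^1$-action with moment map $h:B\to\R$. The zeros of the generating vector field $X_h$ are exactly the critical points of $h$, so $\mathrm{Crit}(h)$ coincides with the fixed-point set $B^{{\mathbb S}^1}$. An equivariant linearization near a fixed point shows that along the normal directions $h$ is a nondegenerate quadratic form whose $\pm$ eigenspaces are the $\pm$ weight spaces of the circle action — hence complex, hence even-dimensional. Thus $h$ is a Morse--Bott function all of whose critical submanifolds are symplectic with \emph{even} index and \emph{even} coindex. The purely Morse-theoretic input is then: a Morse--Bott function on a compact connected manifold with no index-one critical submanifold has connected sublevel sets, and if it also has no coindex-one critical submanifold the level sets $h^{-1}(c)$ are all connected; in particular $\min h$ and $\max h$ are attained on connected fixed-point components. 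This gives (i) and (ii) for $m=1$, with $\mu(B)=[\min h,\max h]$.

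For $m>1$ I would induct, writing $\mu=(\mu_1,\dots,\mu_m)$ and using as the basic tool, for a covector $\xi$ with rational entries, the component $\mu_\xi:=\langle\mu,\xi\rangle$, which generates a subtorus action whose fixed-point set equals $\mathrm{Crit}(\mu_\xi)$. To identify $\mu(B)$ with $\Delta:=\mathrm{conv}\,\mu(B^{{\mathbb T}^m})$: the maximum of $\mu_\xi$ on $B$ is attained on a critical submanifold $Z$, a component of the fixed set of the subtorus generated by $\xi$; by the Morse--Bott analysis of $\mu_\xi$, $Z$ is compact and connected, and it is a symplectic submanifold carrying a residual Hamiltonian ${\mathbb T}^m$-action with moment map $\mu|_Z$. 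Since $\dim Z<\dim B$, the inductive hypothesis gives that $\mu(Z)$ is the convex hull of the images of the ${\mathbb T}^m$-fixed points lying in $Z$; as $\max_B\mu_\xi=\max_Z\mu_\xi$ is attained at a vertex of $\mu(Z)$, it is attained at a genuine fixed point. Because there are only finitely many fixed-point components and rational directions are dense, it follows that for \emph{every} $\xi$ the supporting value $\max_{b\in B}\langle\mu(b),\xi\rangle$ is attained on the finite set $\mu(B^{{\mathbb T}^m})$; hence $\mu(B)\subseteq\Delta$, and once $\mu(B)$ is known to be convex its extreme points are fixed-point images (the same density argument applied to supporting hyperplanes), so $\mu(B)=\Delta$.

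It remains to prove connectedness of the general fibers $\mu^{-1}(\lambda)$ and convexity of $\mu(B)$, which is where the real work is. For connectedness I would again induct, writing $\mu=(\mu',\mu_m)$ with $\mu':B\to\R^{m-1}$ the moment map of the first $m-1$ circles: on the (possibly singular) set $(\mu')^{-1}(\lambda')$, which is connected by induction, one shows $\mu_m$ has connected fibers by the same even-index/even-coindex mechanism, now carried out on a Morse--Bott stratification rather than on a manifold. For convexity I would check that $\mu(B)$ is \emph{locally} convex — near any value $\mu(p)$ the slices of $\mu(B)$ by rational lines through $\mu(p)$ are connected, by applying fiber-connectedness to the subtorus fixing the complementary directions — and then invoke that a closed connected locally convex subset of $\R^m$ is convex (Tietze--Nakajima); alternatively, the equivariant local normal form for Hamiltonian torus actions shows directly that $\mu(B)$ is locally a polyhedral cone. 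The main obstacle throughout is the Morse--Bott bookkeeping in the presence of singular level sets: one needs a clean lemma that a Morse--Bott function with no index- or coindex-one critical submanifold has connected (possibly singular) level sets, and one must verify carefully that fixed-point components of subtori are connected symplectic submanifolds carrying a Hamiltonian residual action, so that the induction on dimension closes.
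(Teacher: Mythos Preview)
The paper does not give a proof of this theorem: it simply attributes the result to Atiyah and Guillemin--Sternberg and states it, so there is no argument in the paper to compare your proposal against.

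That said, your outline is essentially Atiyah's original approach and is sound as a sketch. A couple of remarks. First, once you have connectedness of the fibres of any $(m-1)$-tuple of components, convexity of $\mu(B)$ follows immediately without local normal forms or Tietze--Nakajima: the intersection of $\mu(B)$ with any affine line $L$ is the image under $\mu$ of a fibre of $m-1$ linear combinations of the $\mu_i$, hence connected, hence an interval. Second, the genuine gap you yourself flag is the right one: in the inductive step for fibre-connectedness, the set $(\mu')^{-1}(\lambda')$ need not be a manifold, so you cannot directly run the even-index Morse--Bott argument for $\mu_m$ on it. The standard fix is to first prove connectedness of \emph{regular} fibres (where $(\mu')^{-1}(\lambda')$ is a manifold and the argument goes through), and then pass to singular values by a limiting argument using that the set of regular values is open and dense and that fibre-connectedness is closed under limits in this compact setting. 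Filling in that step carefully is what turns the sketch into a proof.
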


\subsection{Toric Symplectic Manifolds}

The following proposition motivates the definition of a toric symplectic
manifold.

\begin{prop} \label{prop:dimtorus}
If a symplectic manifold $(B,\omega)$ has an effective Hamiltonian
${\mathbb T}^m$-action, then $m\leq(\dim B)/2$.
\end{prop}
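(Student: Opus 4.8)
The plan is to exploit the fact, recalled in Remark~\ref{rmk:isotorb}, that every orbit of a Hamiltonian ${\mathbb T}^m$-action is isotropic. First I would pick a point $p\in B$ whose ${\mathbb T}^m$-orbit ${\mathcal O}_p$ has maximal dimension among all orbits; since the action is effective, a standard argument (the union of non-principal orbits is a proper closed subset) guarantees that some orbit has dimension equal to $\dim {\mathbb T}^m = m$, so in fact $\dim {\mathcal O}_p = m$. The tangent space $T_p {\mathcal O}_p \subset T_p B$ is therefore an $m$-dimensional subspace, spanned by the values at $p$ of the generating vector fields $X_1,\dots,X_m$.

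Next I would invoke linear symplectic algebra on the vector space $(T_p B, \omega_p)$, which is a symplectic vector space of dimension $\dim B = 2n$. By Remark~\ref{rmk:isotorb}, $\omega_p$ vanishes identically on $T_p {\mathcal O}_p$, i.e.\ $T_p {\mathcal O}_p$ is an isotropic subspace. The key elementary fact is that an isotropic subspace $W$ of a symplectic vector space $V$ satisfies $W \subseteq W^{\omega}$, where $W^{\omega}$ is the symplectic orthogonal complement, and that $\dim W + \dim W^{\omega} = \dim V$ by non-degeneracy of $\omega_p$; combining these gives $2\dim W \leq \dim V$. Applying this with $W = T_p {\mathcal O}_p$ yields $2m \leq 2n = \dim B$, that is, $m \leq (\dim B)/2$.

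The one point that requires a little care — and the step I expect to be the main obstacle — is justifying that the orbit through a suitably chosen point really has dimension exactly $m$, rather than merely $\leq m$. For a general point the stabilizer could be positive-dimensional, making the orbit smaller; what rescues the argument is effectiveness together with the principal orbit theorem for compact group actions, which ensures the existence of a dense open set of principal orbits on which the stabilizer is a fixed (finite, by effectiveness and connectedness considerations) subgroup, hence those orbits have the full dimension $m$. Everything else is routine linear algebra. Once the correct $p$ is in hand, the isotropy of its orbit (already established in the excerpt) does all the remaining work, and the inequality drops out immediately.
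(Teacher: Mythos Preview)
Your proposal is correct and follows exactly the same three-step outline as the paper's proof: effectiveness gives an $m$-dimensional orbit, Remark~\ref{rmk:isotorb} shows orbits are isotropic, and elementary linear symplectic algebra bounds the dimension of an isotropic subspace by half. You have simply filled in more detail (the principal orbit argument and the $W\subseteq W^\omega$ dimension count) than the paper, which records the proof as a terse three-line implication chain.
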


\begin{proof}
\[
\begin{array}{rcl}
\text{Effective action} & \Rightarrow & \text{there exist
  $m$-dimensional orbits.} \\
\text{Hamiltonian ${\mathbb T}^m$-action} & \Rightarrow & \text{orbits are
  isotropic (see Remark~\ref{rmk:isotorb}).} \\
\text{Linear Algebra} & \Rightarrow & \dim(\text{isotropic orbit})
  \leq \frac{1}{2}\dim B\,.
\end{array}
\]
\end{proof}

\begin{defn} \label{def:toric}
A \emph{toric symplectic manifold} is a connected symplectic manifold 
$(B^{2n}, \omega)$, equipped with an effective Hamiltonian action 
of the $n$-torus,
\[
\tau:{\mathbb T}^n \cong \R^n / 2\pi \Z^n \hookrightarrow \Ham (B,\omega)\,,
\]
such that the corresponding \emph{moment map} 
\[
\mu : B \to \R^n\,,
\]
well-defined up to a constant, is proper onto its convex image 
$P=\mu(B)\subset \R^n$.
\end{defn}

\begin{remark} \label{rem:toric}
The requirement that the moment map be ``proper onto its convex image'', 
something that is automatic for compact manifolds, makes the theory
of non-compact toric symplectic manifolds analogous to the compact one
(see~\cite{kl}).
\end{remark}

\begin{example} \label{ex:hamR2ntor}
$(\R^{2n}, \omega_{\rm st})$, equipped with the standard Hamiltonian ${\mathbb T}^n$-action
described in Example~\ref{ex:hamR2n}, is a non-compact toric symplectic manifold.
\end{example}
\begin{example} \label{ex:hamPntor}
$(\CP^n, \omega_{\rm FS})$, equipped with the Hamiltonian ${\mathbb T}^n$-action described
in Example~\ref{ex:hamPn}, is a compact toric symplectic manifold. 
\end{example}

\subsection{Classification Theorem and Action-Angle Coordinates}

Any toric symplectic manifold has an associated convex set, the image of
the moment map of the torus action. The convex sets that arise 
in this way are characterized in the following definition.

\begin{defn} \label{def:image}
A convex polyhedral set $P$ in $\R^n$ is called \emph{simple} and
\emph{integral} if:
\begin{itemize}
\item[(1)] there are $n$ edges meeting at each vertex $p$;
\item[(2)] the edges meeting at the vertex $p$ are rational, 
i.e. each edge is of the form $p + tv_i,\ 0\leq t\leq \infty,\ 
{\rm where}\ v_i\in\Z^n$;
\item[(3)] the $v_1, \ldots, v_n$ in (2) can be chosen to be a 
$\Z$-basis of the lattice $\Z^n$.
\end{itemize}
A \emph{facet} is a face of $P$ of codimension one.

A \emph{Delzant set} is a simple and integral convex polyhedral set 
$P\subset \R^n$. A \emph{Delzant polytope} is a \emph{compact} Delzant set.

Two Delzant sets are \emph{isomorphic} if one can be mapped to the other
by a translation.
\end{defn}

In 1988 Delzant~\cite{De} showed that any Delzant polytope determines a 
unique compact toric symplectic manifold. More precisely, if two compact toric 
symplectic manifolds have the same Delzant polytope, then there exists an 
equivariant symplectomorphism between them. This result can be generalized to the 
possibly non-compact setting of Definition~\ref{def:toric} (see~\cite{kl}).

\begin{thm} \label{thm:genDel}
Let $(B,\omega,\tau)$ be a toric symplectic manifold, with
moment map $\mu : B \to \R^n$. Then $P\equiv \mu(B)$ is a
Delzant set. 

Two toric symplectic manifolds are equivariant symplectomorphic
(with respect to a fixed torus acting on both) if and only if their
associated Delzant sets are isomorphic. Moreover, every Delzant set arises from some 
toric symplectic manifold.
\end{thm}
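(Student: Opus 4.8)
The plan is to prove Theorem~\ref{thm:genDel} in three parts, following the now-standard Delzant construction and its extension to the non-compact setting.

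\emph{Part 1: $P = \mu(B)$ is a Delzant set.} First I would work locally near a fixed point $p$ of the torus action. By the Convexity Theorem (Theorem~\ref{thm:ags-convex}), $P$ is already convex, so the content is the simplicity and integrality conditions at each vertex. The key tool is the equivariant Darboux theorem: near a fixed point $p$, one can linearize the ${\mathbb T}^n$-action so that $(B,\omega,\tau)$ looks like $(\R^{2n},\omega_{\rm st})$ with a linear torus action. Effectiveness forces this linear action to be (up to an automorphism of ${\mathbb T}^n$) the standard one of Example~\ref{ex:hamR2n}, whose moment map image is a translate of the standard octant $\R^n_{\geq 0}$; the $n$ edge vectors are then a $\Z$-basis of $\Z^n$. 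This gives conditions (1)--(3) of Definition~\ref{def:image} at each vertex. Properness of $\mu$ onto its convex image is what guarantees that $P$ is globally a Delzant set (locally polyhedral pieces patch to a genuine convex polyhedral set) rather than something pathological; this is exactly the role of the hypothesis highlighted in Remark~\ref{rem:toric}.

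\emph{Part 2: existence — every Delzant set arises.} Given a Delzant set $P\subset\R^n$ with facets cut out by inequalities $\langle x,\nu_i\rangle \geq c_i$ for primitive inward normals $\nu_i\in\Z^n$, I would perform the symplectic reduction construction: map $\R^n\to\R^d$ (where $d$ is the number of facets) by the $\nu_i$, build the induced surjection $\pi:{\mathbb T}^d\to{\mathbb T}^n$ with kernel a subtorus $N$, let $N$ act on $(\C^d,\omega_{\rm st})$ by restriction of the standard ${\mathbb T}^d$-action, and take the symplectic quotient $B := \mu_N^{-1}(\text{const})/N$ at a level determined by the $c_i$. The simplicity and integrality conditions are precisely what is needed to ensure the reduction is at a regular value (so $B$ is a smooth manifold) and that the residual ${\mathbb T}^n = {\mathbb T}^d/N$ action is effective and Hamiltonian with moment map image exactly $P$. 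In the non-compact case one checks, as in~\cite{kl}, that the resulting moment map is proper onto $P$; this follows because $\mu_{\rm st}$ on $\C^d$ is proper onto the octant.

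\emph{Part 3: uniqueness — isomorphic Delzant sets give equivariantly symplectomorphic manifolds.} Here I would argue that any toric symplectic manifold with moment map image $P$ is equivariantly symplectomorphic to the model built in Part 2. The strategy is to build the symplectomorphism over a cover of $P$ by the interior $\mathring{P}$ together with neighborhoods of the vertices (and lower faces), using that over $\mathring{P}$ the manifold is a trivial principal ${\mathbb T}^n$-bundle $\mathring{P}\times{\mathbb T}^n$ in action-angle coordinates with $\omega = \sum dx_j\wedge d\theta_j$, and using the equivariant Darboux model near the vertices. The cohomological obstruction to gluing these local equivariant symplectomorphisms into a global one vanishes because the relevant sheaf ($\mathbb{T}^n$-equivariant closed one-forms, or equivalently locally constant functions) has no cohomology over the contractible convex set $P$; a partition-of-unity / Moser-type argument upgrades a diffeomorphism matching moment maps to a symplectomorphism.

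The main obstacle I expect is Part 3, specifically the bookkeeping needed to glue local equivariant symplectomorphisms near the singular faces of $P$ into a global one — one must simultaneously match the symplectic form, the torus action, and the moment map, and track the ${\mathbb T}^n$-gauge freedom on overlaps. The non-compact case adds the burden of verifying that all constructions respect properness onto $P$, but as indicated in Remark~\ref{rem:toric} and~\cite{kl} this is a mild addition once the compact case is in hand.
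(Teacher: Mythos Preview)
The paper does not give its own proof of Theorem~\ref{thm:genDel}: it states the result as known, attributing the compact case to Delzant~\cite{De} and the non-compact extension to Karshon--Lerman~\cite{kl}, and then proceeds to \emph{use} the symplectic reduction construction from that proof to set up action-angle coordinates. So there is no in-paper argument to compare your proposal against.

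That said, your three-part outline is exactly the standard strategy in those references: (1) local normal form at fixed points to identify the vertex structure of $P$, (2) Delzant's reduction of $(\C^d,\omega_{\rm st})$ by the kernel subtorus $N\subset{\mathbb T}^d$ to realize any Delzant set, and (3) a gluing/Moser argument over the contractible $P$ for uniqueness. One small caution: in Part~1 you invoke Theorem~\ref{thm:ags-convex}, but as stated in the paper that theorem assumes compactness; in the non-compact setting convexity of $\mu(B)$ is part of what is \emph{imposed} in Definition~\ref{def:toric} (``proper onto its convex image''), and the actual work is the local polyhedral structure plus the global control from properness, as handled in~\cite{kl}. Otherwise your sketch is sound and matches the literature the paper defers to.
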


\begin{remark} \label{rem:labels}
One can use the work of Lerman and Tolman~\cite{LeTo} to generalize Theorem~\ref{thm:genDel}
to orbifolds. The outcome is a classification of symplectic toric orbifolds via
\emph{labeled Delzant sets}, i.e. convex polyhedral sets, as in Definition~\ref{def:image}, with ``$\Z$-basis'' in (3) replaced by ``$\Q$-basis'' and with a positive integer label
attached to each facet. 

Each facet $F$ of a labeled Delzant set $P\subset \R^n$ determines a unique lattice vector $\nu_F\in\Z^n$: the primitive inward pointing normal lattice vector. A convenient way of thinking about a positive integer label $m_F\in\N$ associated to $F$ is by dropping the primitive 
requirement from this lattice vector: consider $m_F\nu_F$ instead of $\nu_F$.

In other words, a labeled Delzant set can be defined as a rational simple polyhedral set 
$P\subset \R^n$ with an inward pointing normal lattice vector associated to each of its facets. 
\end{remark}

The proof gives an explicit symplectic reduction construction of a canonical model for each  
toric symplectic manifold, i.e. it associates to each Delzant set $P$ 
an explicit toric symplectic manifold $(B_P,\omega_P,\tau_P)$ with moment map $\mu_P:B_P\to P$. 
One can use these canonical models to derive general properties of toric symplectic manifolds. 
For example, let $\breve{P}$ denote the interior of $P$, and consider 
$\breve{B}_P\subset B_P$ defined by $\breve{B}_P = \mu_P^{-1}
(\breve{P})$. One easily checks that $\breve{B}_P$ is an open dense subset of $B_P$, consisting 
of all the points where the ${\mathbb T}^n$-action is free. It can be described as
\[
\breve{B}_P\cong \breve{P}\times {\mathbb T}^n =
\left\{ (x,y): x\in\breve{P}\subset\R^n\,,\ 
y\in\R^n/2\pi\Z^n\right\}\,,
\]
where $(x,y)$ are symplectic \emph{action-angle} coordinates for
$\omega_P$, i.e.
$$
\omega_P|_{\breve{B}} = d x\wedge d y = \sum_{j=1}^n d x_j \wedge d y_j\ .
$$
Hence, one has a global equivariant Darboux's Theorem in this toric context.
Note that in these action-angle coordinates the moment map is simply given
by
\[
\mu_P (x,y) = x\,,
\]
i.e. projection in the action coordinates.

\section{Toric K\"ahler Metrics} \label{s:toricKmetrics}

\subsection{Linear Compatible Complex Structures}

\begin{defn} \label{def:lccs}
A \emph{compatible complex structure} on a symplectic vector space
$(V,\omega)$ is a complex structure $J$ on $V$, i.e. 
$J \in \End (V))$ with $J^2 = - \Id$, such that
\[
\langle\cdot,\cdot\rangle_J := \omega(\cdot,J\cdot)
\]
is an inner product on $V$. This is equivalent to 
\[
\omega(J\cdot,J\cdot) = \omega(\cdot,\cdot)
\quad\text{and}\quad
\omega(v,Jv)>0\,,\ \forall\, 0\ne v\in V\,.
\] 
The set of all compatible complex structures on a symplectic vector
space $(V,\omega)$ will be denoted by ${\mathcal J}(V,\omega)$.
\end{defn}

The symplectic linear group $Sp(V,\omega)$ acts on ${\mathcal J} (V,\omega)$ by 
conjugation:
\begin{align}
Sp(V,\omega)\times {\mathcal J}(V,\omega) & \to {\mathcal J}(V,\omega) \notag \\
(\Phi, J) & \mapsto \Phi J \Phi^{-1} \notag
\end{align}
This action can be easily seen to be transitive and, if we fix 
$J_0 \in {\mathcal J}(V,\omega)$ and corresponding inner product 
$\langle\cdot,\cdot\rangle_0$,
we have that
\[
{\mathcal J} (V,\omega) = Sp (V,\omega) / 
U (V,\omega, \langle\cdot,\cdot\rangle_0) \,,
\]
where $U (V,\omega, \langle\cdot,\cdot\rangle_0) = Sp (V,\omega) 
\cap O (V, \langle\cdot,\cdot\rangle_0)$ is the unitary group. 
${\mathcal J}(V,\omega)$ is a symmetric space and admits a beautiful 
explicit description due to C.L.~Siegel~\cite{Sie}.

\begin{defn} \label{def:suhs}
The \emph{Siegel upper half space} ${\mathcal S}_n$ is the open 
\emph{contractible} subset of the complex vector space of complex 
symmetric matrices defined by
\begin{align}
{\mathcal S}_n := \{Z = R + i S : & \ \text{$R$ and $S$ are real 
\emph{symmetric} $(n\times n)$ matrices,} \notag \\
& \ \text{with \emph{$S$ positive definite}}\}\,.\notag
\end{align}
\end{defn}

Choose a symplectic basis for $(V, \omega)$, i.e. an isomorphism 
$(V,\omega) \cong (\R^{2n}, \omega_{\rm st})$, and identify the symplectic 
linear group $Sp(V,\omega)$ with the matrix group $Sp (2n,\R)$ consisting of 
$(2n\times 2n)$ real matrices $\Phi$ such that
\[
\Phi^t  \cdot \omega_0 \cdot \Phi = \omega_0\,,
\]
where
\[
\omega_0 =
\begin{bmatrix}
0 & \vdots & \Id \\
\hdotsfor{3} \\
-\Id & \vdots & 0
\end{bmatrix} 
\]
is the matrix form of $\omega_{\rm st}$ written in $(n\times n)$ blocks.
We will also write any $\Phi\in Sp(2n,\R)$ in $(n\times n)$ blocks:
\[
\Phi =
\begin{bmatrix}
A & \vdots & B \\
\hdotsfor{3} \\
C & \vdots & D
\end{bmatrix} \,. 
\]
The following proposition is proved in~\cite{Sie}.

\begin{prop}
$Sp(2n,\R)$ acts on ${\mathcal S}_n$ by linear fractional transformations:
\begin{align}
Sp(2n,\R)\times {\mathcal S}_n & \to {\mathcal S}_n \notag \\
(\Phi, Z) & \mapsto \Phi (Z) := (AZ+B)\cdot (CZ+D)^{-1}\,. \notag
\end{align}
This action is transitive and the isotropy group of $i\Id\in{\mathcal S}_n$ is
$U(n)\subset Sp(2n,\R)$:
\[
U(n) = Sp(2n,\R)\cap O(n) = 
\left\{ \Phi\in Sp(2n,\R)\,:\ \Phi^t \cdot \Phi = \Id \right\}\,.
\]
Hence,
\[
{\mathcal S}_n \cong Sp(2n,\R) / U(n)\,.
\]
\end{prop}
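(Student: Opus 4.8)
The plan is to establish the three claimed facts in sequence: that the linear fractional action is well-defined and transitive on $\mathcal{S}_n$, that the isotropy group of $i\Id$ is $U(n)$, and that consequently $\mathcal{S}_n\cong Sp(2n,\R)/U(n)$. First I would check that the formula $\Phi(Z) = (AZ+B)(CZ+D)^{-1}$ actually lands in $\mathcal{S}_n$. This requires two things: that $CZ+D$ is invertible for $Z\in\mathcal{S}_n$, and that $\Phi(Z)$ is again complex symmetric with positive-definite imaginary part. Invertibility follows because if $(CZ+D)\xi = 0$ for some nonzero $\xi\in\C^n$, then writing $Z = R+iS$ one derives a contradiction with $S>0$ by pairing against $\bar\xi$ and using the symplectic relations among $A,B,C,D$ (namely $C^tD = D^tC$, $A^tD - C^tB = \Id$, $A^tC = C^tA$, which one extracts from $\Phi^t\omega_0\Phi = \omega_0$). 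Symmetry of $\Phi(Z)$ is then a direct computation using those same relations, and positivity of $\operatorname{Im}\Phi(Z)$ follows from the identity $\operatorname{Im}\Phi(Z) = (CZ+D)^{-t}\,S\,(C\bar Z+D)^{-1}$ (conjugate-transpose inverse on the left), which is manifestly positive definite since $S$ is. That the composition law $\Phi_1(\Phi_2(Z)) = (\Phi_1\Phi_2)(Z)$ holds is a routine block-matrix manipulation, so this defines a genuine group action.

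Next I would prove transitivity. It suffices to show every $Z = R+iS\in\mathcal{S}_n$ is in the orbit of $i\Id$. Since $S$ is real symmetric positive definite, write $S = P^2$ with $P$ real symmetric positive definite; then the matrix $\Phi = \begin{bmatrix} P & RP^{-1} \\ 0 & P^{-1}\end{bmatrix}$ is easily checked to lie in $Sp(2n,\R)$, and $\Phi(i\Id) = (Pi + RP^{-1})(P^{-1})^{-1} = (Pi + RP^{-1})P = iP^2 + R = Z$. So the action is transitive.

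Then I would compute the isotropy group of $i\Id$. The condition $\Phi(i\Id) = i\Id$ reads $(Ai+B) = i(Ci+D)$, i.e. $Ai + B = iC - D$... more carefully $(Ai+B)(Ci+D)^{-1} = i\Id$ gives $Ai + B = i(Ci+D) = -C + iD$, so equating real and imaginary parts yields $B = -C$ and $A = D$. Combined with the symplectic relations, $\Phi = \begin{bmatrix} A & B \\ -B & A\end{bmatrix}$ with $A^tA + B^tB = \Id$ and $A^tB$ symmetric — precisely the condition that $A+iB$ be a unitary matrix, under the standard identification of $U(n)$ with $\{\Phi\in Sp(2n,\R): \Phi^t\Phi = \Id\}$. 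This verifies the isotropy group is exactly the $U(n)$ displayed in the statement. Finally, the orbit-stabilizer theorem for the transitive smooth action gives the diffeomorphism $\mathcal{S}_n\cong Sp(2n,\R)/U(n)$, which is the last assertion. The main obstacle is the first step: getting the symplectic relations among the blocks $A,B,C,D$ in exactly the right form and deploying them to prove both invertibility of $CZ+D$ and the positivity/symmetry of $\Phi(Z)$ — these are the computations where sign conventions and the choice of $\omega_0$ matter, and where a careless pairing argument can fail. Everything after that is essentially bookkeeping.
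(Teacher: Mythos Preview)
Your argument is correct and is the standard proof. Note, however, that the paper does not give its own proof of this proposition: it simply cites Siegel~\cite{Sie}. In fact, the explicit element you use for transitivity, $\Phi = \begin{bmatrix} S^{1/2} & RS^{-1/2} \\ 0 & S^{-1/2}\end{bmatrix}$, is precisely the matrix $\Phi_Z$ that the paper writes down immediately after the statement (and uses going forward), so your transitivity step aligns exactly with what the paper records.

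One small correction: from $\Phi^t\omega_0\Phi = \omega_0$ with the paper's convention $\omega_0 = \begin{bmatrix} 0 & \Id \\ -\Id & 0\end{bmatrix}$, the block relations are $A^tC = C^tA$, $B^tD = D^tB$, and $A^tD - C^tB = \Id$; the relation you list as ``$C^tD = D^tC$'' is not among them (the symmetry of $CD^t$ comes instead from $\Phi\omega_0\Phi^t = \omega_0$). This does not affect your argument, since the relations you actually need for symmetry and for the imaginary-part identity $\operatorname{Im}\Phi(Z) = (CZ+D)^{-t} S\, \overline{(CZ+D)}^{\,-1}$ are exactly $A^tC$ symmetric, $B^tD$ symmetric, and $A^tD - C^tB = \Id$. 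Also, the parenthetical ``conjugate-transpose inverse on the left'' is slightly off: the left factor is the inverse-transpose (the conjugate appears on the right), though positivity still follows since for real $v$ one gets $u^t S \bar u > 0$ with $u = (CZ+D)^{-1}v$.
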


Given $Z = R + i S \in {\mathcal S}_n$, define
\[
\Phi_Z :=
\begin{bmatrix}
S^{1/2} & \vdots & R S^{-1/2} \\
\hdotsfor{3} \\
0 & \vdots & S^{-1/2}
\end{bmatrix} 
\in Sp(2n,\R)\,.
\]
Under the action of $Sp(2n,\R)$ on ${\mathcal S}_n$, we have that
\[
\Phi_Z (i\Id) = Z\,.
\]
Let $J_0 \in {\mathcal J}(\R^{2n},\omega_{\rm st})$
be given by
\begin{equation} \label{eq:J0}
J_0 =
\begin{bmatrix}
0 & \vdots & -\Id \\
\hdotsfor{3} \\
\Id & \vdots & 0
\end{bmatrix} \,.
\end{equation}
For each $Z\in{\mathcal S}_n$, define 
$J_Z \in {\mathcal J}(\R^{2n},\omega_{\rm st})$ by
\[
J_Z := (J_0\cdot\Phi_Z)\cdot J_0\cdot (J_0\cdot\Phi_Z)^{-1} =
\begin{bmatrix}
-S^{-1}R & \vdots & - S^{-1}\\
\hdotsfor{3} \\
R S^{-1} R + S & \vdots & R S^{-1}
\end{bmatrix} 
\,.
\]
This defines a bijection
\begin{align}
{\mathcal S}_n & \to {\mathcal J}(\R^{2n},\omega_{\rm st}) \notag \\
Z & \mapsto J_Z \notag
\end{align}
which, up to $J_0$-conjugation, is equivariant with respect to the
$Sp(2n,\R)$-action on both spaces. More precisely, 
if $\Phi\in Sp(2n,\R)$ then
\[
\Phi \cdot J_Z \cdot \Phi^{-1} = J_{Z'} \Leftrightarrow
Z' = (J_0^{-1} \cdot \Phi \cdot J_0)(Z) \,.
\]
In particular, ${\mathcal J}(\R^{2n},\omega_{\rm st})$ is a contractible space and, 
for any symmetric $(n\times n)$ real matrix $U$, we have 
that
\begin{equation} \label{eq:conj}
\Phi = 
\begin{bmatrix}
I & \vdots & 0\\
\hdotsfor{3} \\
U & \vdots & I
\end{bmatrix} 
\in Sp(2n,\R)
\quad\text{and}\quad
\Phi \cdot J_Z \cdot \Phi^{-1} = J_{(Z-U)}\,.
\end{equation}
This will be relevant below.

\subsection{Toric Compatible Complex Structures}

\begin{defn} \label{def:cacs}
A \emph{compatible almost complex structure} on a symplectic manifold
$(B,\omega)$ is an almost complex structure $J$ on $B$, i.e. 
$J \in \Gamma (\End (TB))$ with $J^2 = - \Id$, such that
\[
\langle\cdot,\cdot\rangle_J := \omega(\cdot,J\cdot)
\]
is a Riemannian metric on $B$. This is equivalent to $\omega(J\cdot,
J\cdot) = \omega(\cdot,\cdot)$ and $\omega(X,JX)>0\,,\ \forall\, 0\ne X\in
TB$. 

The space of all compatible almost complex structures on a symplectic
manifold $(B,\omega)$ will be denoted by ${\mathcal J}(B,\omega)$.
\end{defn}
\begin{remark} \ 
\begin{itemize}
\item[(i)] The fact that ${\mathcal J}(\R^{2n},\omega_{\rm st})$ is contractible 
implies that ${\mathcal J}(B,\omega)$ is non-empty, infinite-dimensional and contractible, for any symplectic manifold $(B,\omega)$. 
\item[(ii)] A \emph{K\"ahler manifold} is a symplectic manifold $(B,\omega)$
  with an integrable compatible complex structure $J$, i.e. one that is
  locally isomorphic to the standard complex structure $J_0$ on $\R^{2n}$.
  Note that~(\ref{eq:R2n-Cn}) gives the standard isomorphism
  $(\R^{2n}, J_0) \cong \C^n$.
\item[(iii)] The space of integrable compatible complex structures on a 
symplectic manifold $(B,\omega)$ will be denoted by 
${\mathcal I}(B,\omega)\subset {\mathcal J}(B,\omega)$.
\item[(iv)] In general, ${\mathcal I}(B,\omega)$ can be empty.
\end{itemize}
\end{remark}

\begin{defn} \label{def:tccs}
A \emph{toric compatible complex structure} on a toric symplectic manifold 
$(B^{2n}, \omega, \tau)$ is a ${\mathbb T}^n$-invariant $J\in{\mathcal I}(B,\omega)\subset{\mathcal J}(B,\omega)$.
The space of all such will be denoted by ${\mathcal I}^{{\mathbb T}^n}(B,\omega)\subset{\mathcal J}^{{\mathbb T}^n}(B,\omega)$.
\end{defn}

\begin{remark}
It follows from the classification in Theorem~\ref{thm:genDel}, more precisely 
from the explicit symplectic reduction construction of the canonical model for 
any compact toric symplectic manifold $(B^{2n}, \omega, \tau)$, that 
${\mathcal I}^{{\mathbb T}^n}(B,\omega)$ is always non-empty.
\end{remark}

\subsection{Local Form of Toric Compatible Complex Structures}

It follows from the above bijection between ${\mathcal J}(\R^{2n},\omega_{\rm st})$ 
and the Siegel upper half space ${\mathcal S}_n$ that any 
$J\in {\mathcal J}^{{\mathbb T}^n}(\breve{B}, \omega|_{\breve{B}})$ can be written 
in action-angle coordinates $(x,y)$ on 
$\breve{B}\cong\breve{P}\times{\mathbb T}^n$ as
\[
J = 
\begin{bmatrix}
- S^{-1} R & \vdots & -S^{-1} \\
\hdotsfor{3} \\
R S^{-1}R + S & \vdots & R S^{-1}
\end{bmatrix}    
\]
where $R = R(x)$ and $S=S(x)$ are real symmetric $(n\times n)$ matrices, 
with $S$ positive definite.

For \emph{integrable} toric compatible complex structures we have that:
\begin{align}
& J \in {\mathcal I}^{{\mathbb T}^n} \subset {\mathcal J}^{{\mathbb T}^n} 
\Leftrightarrow \frac{{\partial}Z_{ij}}{{\partial}x_k} = \frac{{\partial}Z_{ik}}{{\partial}x_j} \notag \\
& \notag \\
\Leftrightarrow & \exists\ f:\breve{P} \to \C\,,\ f(x) = r(x) + i s(x)\,,\ 
\text{such that} \notag \\
& Z_{ij} = \frac{{\partial}^2 f}{{\partial}x_i {\partial}x_j} = 
\frac{{\partial}^2 r}{{\partial}x_i {\partial}x_j} + i \frac{\partial^2 s}{{\partial}x_i {\partial}x_j} =
R_{ij} + i S_{ij} \,.\notag
\end{align}

Any real function $h:\breve{P}\to\R$ is the Hamiltonian of a $1$-parameter family 
\[
\phi_t:\breve{B}\to\breve{B}
\] 
of ${\mathbb T}^n$-equivariant symplectomorphisms. These are
given in action-angle coordinates $(x,y)$ on 
$\breve{B} \cong \breve{P}\times{\mathbb T}^n$ by
\[
\phi_t(x,y) = (x, y - t \frac{{\partial}h}{{\partial}x})\,.
\]
Hence, it follows from~(\ref{eq:conj}) that the natural action of 
$\phi_t$ on ${\mathcal J}^{{\mathbb T}^n}$, given by
\[
\phi_t \cdot J = (d\phi_t) \circ J \circ (d\phi_t)^{-1}\,,
\]
corresponds in the Siegel upper half space parametrization to
\[
\phi_t \cdot (Z = R+i S) = (R + tH) + i S\,,
\]
where
\[
H = (h_{ij}) = \left(\frac{{\partial}^2 h}{{\partial}x_i {\partial}x_j} \right)\,.
\]
This implies that, for any integrable $J\in{\mathcal I}^{{\mathbb T}^n}$, there exist 
action-angle coordinates $(x,y)$ on $\breve{B} \cong \breve{P}\times{\mathbb T}^n$ 
such that $R\equiv 0$, i.e. such that
\[
J = 
\begin{bmatrix}
0 & \vdots & -S^{-1} \\
\hdotsfor{3} \\
S & \vdots & 0
\end{bmatrix}    
\]
with
\[
S = S(x) = \left(s_{ij}(x)\right) = 
\left(\frac{{\partial}^2 s}{{\partial}x_i {\partial}x_j}\right)
\]
for some
\[
\text{real potential function} \quad s:\breve{P}\to\R\,.
\]
Holomorphic coordinates for $J$ are given in this case by
\begin{equation} \label{eq:holcoords}
z(x,y) = u(x,y) + i v(x,y) = \frac{{\partial}s}{{\partial}x}(x)
+ i y\,.
\end{equation}

The corresponding Riemannian (K\"ahler) metric 
\[
\langle\cdot, \cdot\rangle_J := \omega (\cdot , J\cdot)
\]
on $\breve{B} \cong \breve{P}\times{\mathbb T}^n$ can the be written in matrix form as
\begin{equation} \label{metricG}
\omega_0 \cdot J = 
\begin{bmatrix}
0 & \vdots & \Id \\
\hdotsfor{3} \\
-\Id & \vdots & 0
\end{bmatrix} 
\cdot
\begin{bmatrix}
0 & \vdots & -S^{-1} \\
\hdotsfor{3} \\
S & \vdots & 0
\end{bmatrix}    
=
\begin{bmatrix}
S & \vdots & 0 \\
\hdotsfor{3} \\
0 & \vdots & S^{-1}
\end{bmatrix}    
\end{equation}
with
\[
S = \left(\frac{{\partial}^2 s}{{\partial}x_i {\partial}x_j}\right)\,.
\]

\begin{defn} \label{def:symppot}
We will call such a potential function
\[ 
s : \breve{P} \to \R 
\] 
the \emph{symplectic potential} of both the complex structure 
$J$ and the metric $\langle\cdot, \cdot\rangle_J$
\end{defn}

\begin{remark}
This particular way to arrive at the above local form for any 
$J\in{\mathcal I}^{{\mathbb T}^n}$ is due to Donaldson~\cite{D4}, 
and illustrates a small part of his formal general framework 
for the action of the symplectomorphism group of a symplectic 
manifold on its space of compatible complex structures
(cf.~\cite{D1}).
\end{remark}

\begin{example} \label{ex:symppotR2n}
Consider the standard linear complex structure 
$J_0\in{\mathcal I}^{{\mathbb T}^n}(\R^{2n}, \omega_{\rm st})$ 
given by~(\ref{eq:J0}). In action-angle coordinates $(x,y)$ on 
\[
\breve{\R}^{2n} = (\R^2\setminus\{(0,0)\})^n \cong 
(\R^+)^n \times {\mathbb T}^n  = \breve{P}\times{\mathbb T}^n\,,
\]
its symplectic potential is given by
\begin{align}
s:\breve{P} = (\R^+)^n & \longrightarrow \R \notag \\
x = (x_1, \ldots, x_n) & \longmapsto 
s(x) = \frac{1}{2} \sum_{i=1}^n x_i \log (x_i) \,. \notag
\end{align}

Hence, in these action-angle coordinates, the standard complex 
structure has the matrix form
\[
J_0 = 
\begin{bmatrix}
0 & \vdots & \diag(-2x_i) \\
\hdotsfor{3} \\
\diag(1/2x_i) & \vdots & 0
\end{bmatrix}    
\]
while the standard flat Euclidean metric becomes
\[
\begin{bmatrix}
\diag(1/2x_i) & \vdots & 0 \\
\hdotsfor{3} \\
0 & \vdots & \diag(2x_i)
\end{bmatrix} \,.
\]
\end{example}

\subsection{Symplectic Potentials for Compact Toric Symplectic Manifolds}

The proof of Theorem~\ref{thm:genDel} associates
to each Delzant set $P\subset\R^n$, via an explicit symplectic reduction 
construction, a canonical K\"ahler toric manifold 
\[
(B_P^{2n}, \omega_P, \tau_P, \mu_P, J_P)
\quad\text{such that}\quad\mu_P (B_P) = P\,.
\]
In~\cite{Gui1} Guillemin gave an explicit formula for the symplectic potential
of this canonical K\"ahler metric. To write it down one just needs some simple
combinatorial data that can be easily obtained directly from the polytope $P$.

Let $F_i$ denote the $i$-th facet (codimension-$1$ face) of the polytope $P$. 
The affine defining function of $F_i$ is the function
\begin{align}
\ell_i : \R^n & \longrightarrow \R \notag \\
x & \longmapsto \ell_i (x) = \langle x, \nu_i \rangle - \lambda_i\,, \notag
\end{align}
where $\nu_i\in\Z^n$ is a primitive inward pointing normal to $F_i$ and $\lambda_i\in\R$ 
is such that $\ell_i|_{F_i} \equiv 0$. Note that $\ell_i|_{\breve{P}} > 0$.

\begin{theorem} \label{thm:Gui1}
In appropriate action-angle coordinates $(x,y)$, the canonical symplectic 
potential $s_P:\breve{P}\to\R$ for $J_P |_{\breve{P}}$ is given by
\[
s_P (x) = \frac{1}{2} \sum_{i=1}^d \ell_i (x) \log \ell_i (x)\,,
\]
where $d$ is the number of facets of $P$.
\end{theorem}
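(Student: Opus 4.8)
The plan is to exploit the explicit symplectic reduction construction of $(B_P,\omega_P,\tau_P,\mu_P,J_P)$ that underlies the proof of Theorem~\ref{thm:genDel}. Recall that if $P$ has $d$ facets, then $B_P$ is obtained from $\C^d\cong\R^{2d}$ (with its standard flat K\"ahler structure and standard Hamiltonian ${\mathbb T}^d$-action) by symplectic reduction at a regular value of the moment map for a subtorus $N\subset{\mathbb T}^d$, where $N$ is the kernel of the surjection $\pi:{\mathbb T}^d\to{\mathbb T}^n$ dual to the inclusion of the lattice generated by the inward normals $\nu_1,\dots,\nu_d$. The first step is therefore to write down, on the open dense stratum $\breve{\C}^d=(\C^*)^d$, the action-angle coordinates $(\tilde x,\tilde y)$ for which the standard flat metric has symplectic potential $\frac12\sum_{i=1}^d \tilde x_i\log\tilde x_i$, exactly as in Example~\ref{ex:symppotR2n}. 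The second step is to track how these coordinates descend under symplectic reduction: the reduced action-angle coordinates $(x,y)$ on $\breve B_P\cong\breve P\times{\mathbb T}^n$ are obtained by restricting to the level set $\tilde\mu_N^{-1}(0)$ and quotienting by $N$, and under this identification one has precisely $\tilde x_i=\ell_i(x)$, since the moment map for the residual ${\mathbb T}^n$-action is $\mu_P(x,y)=x$ while the relation between the $\C^d$-moment map components and $x$ is affine with linear part given by the $\nu_i$ and constant part fixed by the level-set value (which encodes the $\lambda_i$).

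Once this dictionary $\tilde x_i=\ell_i(x)$ is established, the third step is a short computation showing that symplectic potentials transform correctly under reduction. Concretely, a ${\mathbb T}^d$-invariant compatible complex structure on $\breve{\C}^d$ with symplectic potential $\tilde s(\tilde x)$ descends to the ${\mathbb T}^n$-invariant compatible complex structure on $\breve B_P$ whose symplectic potential is, in the coordinates $(x,y)$, the composition $s_P(x)=\tilde s(\ell_1(x),\dots,\ell_d(x))$ plus possibly an affine term in $x$. The point is that the Hessian defining the metric (see~(\ref{metricG})) transforms by the chain rule, and the surjectivity/splitting data ensure that the ${\mathbb T}^n$-directions and the reduced potential match up; affine terms in the symplectic potential do not change the metric, as they correspond to the choice of action-angle coordinates, exactly as in~(\ref{eq:conj}). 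Substituting $\tilde s(\tilde x)=\frac12\sum_i\tilde x_i\log\tilde x_i$ and $\tilde x_i=\ell_i(x)$ yields $s_P(x)=\frac12\sum_{i=1}^d\ell_i(x)\log\ell_i(x)$, up to an affine function of $x$ which may be discarded; this is the claimed formula.

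The main obstacle is the careful bookkeeping in the second step: making precise how the flat symplectic potential on $\C^d$ restricts to the level set and descends to the quotient, and in particular verifying that the affine relation between the ambient moment map coordinates and the reduced ones is exactly $\tilde x_i=\ell_i(x)=\langle x,\nu_i\rangle-\lambda_i$, with the $\lambda_i$ correctly produced by the chosen regular value. This requires keeping track of dual bases: one must fix a splitting $\R^d\cong\R^n\oplus\R^{d-n}$ compatible with the exact sequence $0\to\mathfrak n\to\R^d\to\R^n\to 0$ coming from the normals, compute the induced moment map for $N$, and identify its zero level. The rest — the chain-rule verification that the reduced Hessian is $(\partial^2 s_P/\partial x_i\partial x_j)$ and the remark that the action-angle coordinates can be chosen to kill any affine ambiguity — is routine linear algebra once the dictionary is in place.
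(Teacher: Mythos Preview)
The paper does not actually prove this theorem: it is stated as Guillemin's result, with attribution to~\cite{Gui1}, and is used as input for the rest of the notes. So there is no ``paper's own proof'' to compare against.

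That said, your outline is the standard argument and is essentially correct. The Delzant construction realizes $B_P$ as the K\"ahler quotient of $(\C^d,\omega_{\rm st},J_0)$ by the subtorus $N=\ker(\beta:{\mathbb T}^d\to{\mathbb T}^n)$, where $\beta(e_i)=\nu_i$; on the reduced space the ambient action coordinates satisfy $\tilde x_i=\ell_i(x)$, and the chain-rule/linear-algebra check that the reduced metric in $(x,y)$-coordinates has Hessian $S_P=\beta\,\tilde S\,\beta^t=\Hess_x\bigl(\tilde s\circ\ell\bigr)$, with the $y$-block equal to $S_P^{-1}$, goes through exactly as you indicate. (One way to see the $y$-block: the horizontal lift of $y\in\R^n$ to the level set is $\tilde S\,\beta^t S_P^{-1} y$, and its $\tilde S^{-1}$-length squared is $y^t S_P^{-1} y$.) Affine ambiguities in $s_P$ are indeed absorbed by the choice of action-angle coordinates, as you note via~(\ref{eq:conj}). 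This is the approach of Guillemin's original paper~\cite{Gui1} and its reformulations in~\cite{Abr2} and~\cite{CDG}; the latter reference in particular carries out precisely the reduction bookkeeping you describe.
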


\begin{example} \label{ex:symppotR2n-2}
The symplectic potential presented in Example~\ref{ex:symppotR2n} for
the standard flat Euclidean metric on $\R^{2n}$ is the canonical
symplectic potential of the corresponding Delzant set
$P = (\R^+_0)^n \subset \R^n$.
\end{example}

\begin{example} \label{ex:symppotPn}
For projective space $\CP^n$ the polytope $P\subset\R^n$ can be taken to be the 
standard simplex, with defining affine functions
\[
\ell_i (x) = x_i\,,\ i=1,\ldots,n\,, 
\quad\text{and}\quad
\ell_{n+1}(x) = 1 - r\,,
\]
where $r = \sum_i x_i$. 

The canonical symplectic potential $s_P:\breve{P}\to\R$, given by
\[
s_P (x) = \frac{1}{2} \sum_{i=1}^n x_i \log x_i + 
\frac{1}{2}(1-r) \log (1-r)\,,
\]
defines the standard complex structure $J_{FS}$ and Fubini-Study metric
on $\CP^n$.
\end{example}

Theorem~\ref{thm:Abr2} below provides the symplectic version of the 
${\partial}{\overline{\partial}}$-lemma in this toric context, 
characterizing the symplectic potentials that correspond to toric 
compatible complex structures on a toric symplectic manifold.
It is an immediate extension to our possibly non-compact setting
of the compact version proved in~\cite{Abr2}. To properly state it
we need the following definition.

\begin{defn} \label{def:Jcomplete}
Let $(B,\omega,\tau)$ be a symplectic toric manifold and denote by 
\[
Y_1,\ldots, Y_n \in{\mathcal X}_H(B, \omega)
\] 
the Hamiltonian vector fields generating the torus action. A toric 
compatible complex structure
$J\in{\mathcal I}^{\mathcal T}(B,\omega)$ is said to be \emph{complete} 
if the $J$-holomorphic vector fields 
\[
JY_1,\ldots, JY_n \in{\mathcal X}(B)
\] 
are complete. The space of all complete toric compatible complex
structures on $(B,\omega,\tau)$ will be denoted by 
${\mathcal I}_c^{\mathcal T}(B,\omega)$.
\end{defn}

\begin{theorem} \label{thm:Abr2}
Let $J$ be any complete compatible toric complex structure on the 
symplectic toric manifold $(B_P,\omega_P,\tau_P)$. Then, in suitable
action-angle $(x,y)$-coordinates on $\breve{B}_P\cong\breve{P}
\times {\mathbb T}^n$, $J$ is given by a symplectic potential
$s\in C^\infty(\breve{P})$ of the form
\[
s(x) = s_P (x) + h(x)\,,
\]
where $s_P$ is given by Theorem~\ref{thm:Gui1}, $h$ is smooth on the whole
$P$, and the matrix $S=\Hess_x(s)$ is positive definite on $\breve{P}$
and has determinant of the form
\[
\Det(S) = \left(\delta \prod_{r=1}^d \ell_r \right)^{-1}\,,
\]
with $\delta$ being a smooth and strictly positive function on the whole $P$.

Conversely, any such potential $s$ determines a (not necessarily complete)
complex structure on $\breve{B}_P\cong\breve{P}\times {\mathbb T}^n$, that 
extends uniquely to a well-defined compatible toric complex structure $J$ 
on the toric symplectic manifold $(B_P,\omega_P,\tau_P)$.
\end{theorem}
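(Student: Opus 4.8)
The plan is to prove Theorem~\ref{thm:Abr2} by reducing it to the compact version proved in~\cite{Abr2} via an exhaustion argument, using the completeness hypothesis to control behavior at infinity. First I would recall the local picture established in the preceding subsection: on $\breve{B}_P\cong\breve{P}\times{\mathbb T}^n$ any integrable ${\mathbb T}^n$-invariant compatible $J$ can be brought, after applying an equivariant symplectomorphism of the form $\phi_t(x,y)=(x,y-t\,\partial h/\partial x)$, to the normal form with $R\equiv 0$, hence is described by a single symplectic potential $s\in C^\infty(\breve{P})$ with $S=\Hess_x(s)$ positive definite. So the content of the theorem is entirely about the boundary behavior of $s$ near $\partial P$: that it differs from Guillemin's $s_P$ by a function smooth across all of $P$, and that $\Det(S)$ has the prescribed singularity structure $(\delta\prod\ell_r)^{-1}$ with $\delta$ smooth and positive on $P$.

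Next I would address the boundary behavior facet by facet. Near the interior of a facet $F_i$, the canonical model $B_P$ looks (equivariantly, symplectically) like $\C\times(\C^*)^{n-1}$ with its standard toric structure, by the Delzant/symplectic-reduction construction; this is a local statement that holds verbatim in the non-compact setting since $P$ is still a Delzant set in the sense of Definition~\ref{def:image}. The requirement that $J$ extend smoothly across the corresponding $(2n-2)$-dimensional symplectic submanifold $\mu_P^{-1}(F_i)$ forces, exactly as in the compact case, that $s-\frac12\ell_i\log\ell_i$ be smooth in the transverse variable and that the $i$-th diagonal block of $S$ blow up like $1/\ell_i$; iterating over all facets meeting at a vertex and using condition (3) in Definition~\ref{def:image} (the normals form a $\Z$-basis) gives the full local normal form near each boundary stratum. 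This is precisely the computation carried out in~\cite{Abr2}; since it is purely local on $P$, it transfers without change, giving $s=s_P+h$ with $h$ smooth on $P$ and $\Det(S)=(\delta\prod\ell_r)^{-1}$ with $\delta$ smooth and strictly positive on $P$.

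For the converse direction I would start from a potential $s=s_P+h$ of the stated form, use Theorem~\ref{thm:Gui1} to know that $s_P$ alone produces the canonical $J_P$ which extends, and then show that adding a globally smooth $h$ with the stated positivity and determinant conditions does not destroy the extension: the extension is again a local matter across each boundary face, and the conditions on $S$ and $\Det(S)$ are exactly what is needed for the metric $\mathrm{diag}(S,S^{-1})$ and the complex structure $\begin{bmatrix}0&-S^{-1}\\S&0\end{bmatrix}$ to extend smoothly and compatibly. Integrability is automatic because $S=\Hess(s)$ comes from a potential, via the characterization $\partial Z_{ij}/\partial x_k=\partial Z_{ik}/\partial x_j$ recalled above. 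The one place where non-compactness genuinely intervenes is the claim that this extension is to a \emph{well-defined} complex structure on the possibly non-compact $B_P$ — but by Remark~\ref{rem:toric} the properness of $\mu_P$ makes $B_P$ behave like a compact manifold for this purpose, so the gluing over the finitely many facet-types goes through; completeness of $JY_1,\ldots,JY_n$ is then what must be \emph{assumed} in the forward direction (it is not a consequence of the local normal form, which is why Definition~\ref{def:Jcomplete} is needed) and is \emph{not} claimed in the converse.

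The main obstacle I anticipate is not any single computation but making rigorous the passage from "local model near each facet" to a global statement on a non-compact $B_P$ — i.e. checking that the finitely many local normal forms patch, uniformly in the non-compact directions, and that the exhaustion of $P$ by compact Delzant sub-polytopes (or the direct local argument) really does reduce everything to~\cite{Abr2}. In particular one must be careful that "$h$ smooth on the whole $P$" is the right global hypothesis and that the completeness assumption is used \emph{only} in the forward direction and is genuinely necessary; I would flag explicitly that without it the converse still yields a complex structure on $B_P$, just possibly an incomplete one in the sense of Definition~\ref{def:Jcomplete}.
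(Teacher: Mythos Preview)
The paper does not actually give a proof of this theorem: immediately before the statement it simply says that it ``is an immediate extension to our possibly non-compact setting of the compact version proved in~\cite{Abr2}'', and nothing further is offered. So there is no detailed argument in the paper to compare against; the paper's ``proof'' is a one-line citation.

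Your outline is a reasonable unpacking of that one line, and its core --- that the boundary behavior of $s$ near each face of $P$ is a purely local question, so the computations from~\cite{Abr2} transfer verbatim to a non-compact Delzant set --- is exactly the content of the paper's claim of ``immediate extension''. A couple of small comments. First, your opening sentence promises an ``exhaustion argument'', but what you actually sketch (and what is in fact needed) is the direct local/facet-by-facet argument; the exhaustion idea is never used and should probably be dropped from the plan. Second, you correctly flag that completeness of the $JY_k$ is an input only in the forward direction, but you do not say \emph{where} it enters. In the compact case of~\cite{Abr2} the comparison of $J$ with the canonical $J_P$ goes through the fact that the holomorphic $(\C^\ast)^n$-action integrates and identifies $(\breve{B}_P,J)$ with the standard $(\C^\ast)^n$; in the non-compact setting completeness of $JY_1,\ldots,JY_n$ is precisely what guarantees this integration, and hence that the holomorphic coordinates~(\ref{eq:holcoords}) are globally defined. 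Making that explicit would close the only real gap in your sketch; the paper itself is silent on this point.
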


\begin{remark} \label{rem:Abr3}
If one takes into account Remark~\ref{rem:labels}, the word ``manifold'' 
can be replaced by ``orbifold'' in Theorems~\ref{thm:Gui1} 
and~\ref{thm:Abr2} (see~\cite{Abr3}). 
\end{remark}

\begin{remark}
There is no immediate relation between completeness of a toric compatible 
complex structure and completeness of the associated toric K\"ahler metric.
See Remark~\ref{rem:complete}.
\end{remark}

\subsection{Scalar Curvature} 
\label{ssec:scalcurveq}

We now recall from~\cite{Abr1} a particular formula for the scalar curvature 
in action-angle $(x,y)$-coordinates.
A K\"ahler metric of the form~(\ref{metricG}) has scalar curvature $Sc$ given 
by\footnote[1]{The normalization for the value of the scalar curvature we are 
using here differs from the one used in~\cite{Abr1,Abr2} by a factor of $1/2$.}
\[
Sc = - \sum_{j,k} \frac{\partial}{{\partial}x_j}
\left( g^{jk}\, \frac{{\partial}\log \Det(S)}{{\partial}x_k} \right)\,,
\]
which after some algebraic manipulations becomes the more compact
\begin{equation} \label{scalarsymp}
Sc = - \sum_{j,k} \frac{{\partial}^2 s^{jk}}{{\partial}x_j {\partial}x_k}\,, 
\end{equation}
where the $s^{jk},\ 1\leq j,k\leq n$, are the entries of the inverse 
of the matrix $S = \Hess_x (s)$, $s\equiv$ symplectic potential.
See~\cite{D1} for an appropriate interpretation of this formula for
the scalar curvature. 

\subsection{Symplectic Potentials and Affine Transformations}

Because the Delzant set $P\subset\R^n$ of a symplectic toric manifold
is only really well defined up to translations (i.e. additions 
of constants to the moment map) and $SL(n,\Z)$ transformations (i.e.
changes of basis of the torus ${\mathbb T}^n = \R^n / 2\pi\Z^n$), 
symplectic potentials should transform naturally under these type of maps. 
While the effect of translations is trivial to analyse, the effect of
$SL(n,\Z)$ transformations is more interesting. In fact:
\begin{center}
symplectic potentials transform quite naturally

under any $GL(n,\R)$ linear transformation.
\end{center}

Let $T\in GL(n,\R)$ and consider the linear symplectic change
of action-angle coordinates
\[
x := T^{-1}x'\quad\text{and}\quad y:=T^t y'\,.
\]
Then 
\[
P' = \bigcap_{a=1}^d \{x'\in\R^n\,:\ 
\ell'_a (x') := \langle x', \nu'_a \rangle + \lambda'_a \geq 0\} 
\]
becomes
\[
P:= T^{-1}(P') = \bigcap_{a=1}^d \{x\in\R^n\,:\ \ell_a (x) := 
\langle x, \nu_a \rangle + \lambda_a \geq 0\}
\]
with 
\[
\nu_a = T^t \nu'_a\quad\text{and}\quad \lambda_a = \lambda'_a\,,
\]
and symplectic potentials transform by
\[
s = s'\circ T \quad\text{(in particular, $s_{P} = s_P' \circ T)$.}
\]
The corresponding Hessians are related by
\[
S = T^t (S'\circ T) T
\]
and
\[
Sc = Sc' \circ T\,.
\]

\begin{example} \label{ex:hirzebruch}
Figure~\ref{fig:hirzebruch} illustrates two equivalent descriptions 
of a toric symplectic rational ruled $4$-manifold or, equivalently,
of a Hirzebruch surface 
\[
H^2_m := \CP ({\mathcal O}(-m)\oplus\C) \to \CP^{1}\,, \ m\in\N\,.
\] 
The linear map $T\in GL(2,\R)$ relating the two is given by
\[
T = 
\begin{bmatrix}
m & -1 \\
0 & \ 1
\end{bmatrix}    
\]
The inward pointing normal that should be considered for each facet is
specified. The polytope on the right is a standard Delzant polytope for
the Hirzebruch surface $H^2_m$. The polytope on the left is very useful 
for the K\"ahler metric constructions of section~\ref{s:calabi} and
was implicitly used by Calabi in~\cite{C2}.
\end{example}

\begin{figure}
      \centering
      \includegraphics[viewport = 50 0 250 100, scale=1.0]{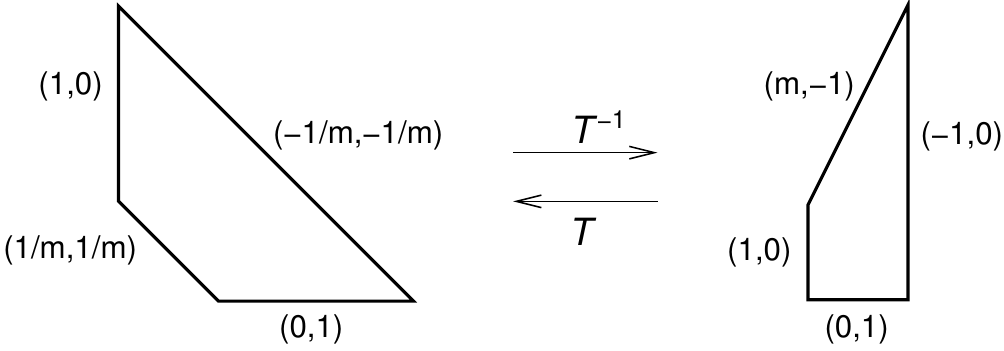}
      \caption{Hirzebruch surfaces.}
      \label{fig:hirzebruch}
\end{figure}

\section{Toric Constant Curvature Metrics in Real Dimension 2}
\label{s:2d-metrics}

In real dimension $2$ any orientable Riemannian manifold is K\"ahler,
since its area form is a symplectic form and oriented rotation by $\pi/2$
on each tangent plane is a compatible complex structure. In this section
we write down the symplectic potentials that give rise to toric constant
(scalar) curvature metrics in real dimension $2$ and identify the 
underlying toric symplectic manifolds $B$. This will be a warm-up for the
higher dimensional examples presented in section~\ref{s:calabi}.

According to formula~(\ref{scalarsymp}) for the scalar curvature,
we are looking for symplectic potentials $s:\breve{P}\subset\R\to\R$
such that $s'' >0$ and
\[
-\left(\frac{1}{s''(x)} \right)'' = 2k\,,
\]
where $k\in\R$ denotes the Gauss curvature. This implies that
\[
s''(x) = - \frac{1}{k x^2 - 2bx - c}\,,\ b,c\in\R
\]
where $x\in\breve{P}\subset\R$ is such that $s''(x)>0$.

\subsection{Cylinders} 

Suppose that $k=b=0$. Then
\[
s''(x) = \frac{1}{c} > 0 \Rightarrow c>0
\quad\text{and}\quad x\in\R\,.
\]
This means that $P = \R$ and $s: \breve{P}=\R \to \R$ can be 
written as
\[
s(x) = \frac{x^2}{2c}\,.
\]
Hence
\[
B = \breve{B} = \breve{P}\times {\mathbb T}^1 = \R \times S^1 =
\left\{ (x,y): x\in\R\,,\ 
y\in\R/2\pi\Z\right\}
\]
and the metric is given in matrix form by
\[
\begin{bmatrix}
1/c & 0 \\
0 & c
\end{bmatrix}\,,  
\]
i.e. we get a flat cylinder of radius $\sqrt{c}$.

\subsection{Cones}

Suppose that $k=0$ and $b\ne 0$. Then, modulo a translation
and possible sign change in the action variable $x$, we can
assume that $c=0$, $b>0$ and
\[
s''(x) = \frac{1}{2bx} > 0 \Rightarrow x > 0\,.
\]
This means that $P = \left[0, +\infty\right[$ and 
$s: \breve{P}= \left]0,+\infty \right[ \to \R$ can be written as
\[
s(x) = \frac{1}{b}\cdot \frac{1}{2} x\log x \,.
\]
If $b=1$ this is the canonical symplectic potential giving the
flat Euclidean metric on $\R^2$ (cf. Example~\ref{ex:symppotR2n}).
In general, as explained in~\cite{Abr3}, this is the symplectic 
potential of a cone metric of angle $\pi b$ on $\R^2$, given in matrix 
form by
\[
\begin{bmatrix}
\frac{1}{2bx} & 0 \\
0 & 2bx
\end{bmatrix}\,.  
\]
When $b=1/p$ with $p\in\N$, this corresponds to an orbifold flat metric
on $\R^2/\Z_p$ (see~\cite{Abr3}).

\subsection{Footballs}

Suppose that $k>0$. Then, modulo a translation in the action variable $x$, 
we can assume that $b=0$ and
\[
s''(x) = \frac{1}{c - kx^2} > 0 \Rightarrow
c>0\  \text{and} \ -\sqrt{c/k}<x<\sqrt{c/k}\,.
\]
This means that $P = \left[-\sqrt{c/k}, \sqrt{c/k}\right]$ and 
$s: \breve{P}= \left]-\sqrt{c/k}, \sqrt{c/k} \right[ \to \R$ can be written as
\[
s(x) = \frac{1}{\sqrt{ck}}\cdot\frac{1}{2}
\left[ (x+\sqrt{c/k}) \log (x+\sqrt{c/k})
+ (-x+\sqrt{c/k}) \log (-x+\sqrt{c/k}) \right]\,.
\]
If $c=1/k$ this is the canonical symplectic potential giving the
smooth round european football metric of total area $4\pi/k$ and constant Gauss 
curvature $k$ on $\CP^1 \equiv {\mathbb S}^2$ (cf. Example~\ref{ex:symppotPn}).
In general, this is the symplectic potential of a singular american football 
metric of angle $\pi \sqrt{ck}$ at the ``poles".

\subsection{Hyperbolic Metrics}

Suppose that $k<0$. Then, modulo a translation in the action variable $x$, 
we can assume that $b=0$ and
\[
s''(x) = \frac{1}{c - kx^2}\,.
\]

If $c>0$ then $s''(x)>0\,,\ \forall\,x\in\R$, which means that
$P = \R$ and $s: \breve{P}=\R \to \R$ can be written as
\[
s(x) = \sqrt{\frac{-1}{ck}} \arctan \left(\sqrt{\frac{-k}{c}}\,x\right)\,.
\]
This is the symplectic potential of a metric of constant Gauss curvature
$k<0$ on
\[
B = \breve{B} = \breve{P}\times {\mathbb T}^1 = \R \times S^1 =
\left\{ (x,y): x\in\R\,,\ 
y\in\R/2\pi\Z\right\}\,,
\]
i.e. an hyperboloid.

If $c<0$ then
\[
s''(x) > 0 \Rightarrow x\in \left] -\infty, - \sqrt{c/k} \right[
\cup \left] \sqrt{c/k}, +\infty \right[\,.
\]
Hence, up to a sign change in the action variable $x$, we may
assume that $P = \left[\sqrt{c/k},+\infty\right[$ and
$s:\breve{P} = \left]\sqrt{c/k},+\infty\right[ \to\R$ can be
written as
\[
s(x) = \frac{1}{\sqrt{ck}}\cdot\frac{1}{2}
\left[ (x-\sqrt{c/k}) \log (x-\sqrt{c/k}) 
- (x+\sqrt{c/k}) \log (x+\sqrt{c/k}) \right] \,.
\]
If $c=1/k$ this is the symplectic potential of the hyperbolic
metric of constant Gauss curvature $k<0$ on $\R^2$. In other words,
in the action-angle coordinates $(x,y)$ of this symplectic model,
the hyperbolic metric is given in matrix form by
\[
\begin{bmatrix}
\frac{-k}{(kx)^2 -1} & 0 \\
0 & \frac{(kx)^2 -1}{-k}
\end{bmatrix}\,.  
\]
More generally, i.e. when $c\ne 1/k$, we get singular hyperbolic 
metrics on $\R^2$, with a cone singularity of angle $\pi\sqrt{ck}$ at
the origin.

\begin{remark} \label{rem:complete}
This case illustrates the fact that there is no immediate relation 
between completeness of a toric compatible complex structure and 
completeness of the associated toric K\"ahler metric.
Here the metric is complete but the complex structure is not.
In fact, it easily follows from~(\ref{eq:holcoords}) that $\R^2$ with 
this complex structure is biholomorphic to an open bounded disc 
$D\subset\C$.
\end{remark}

If $c=0$ we have that
\[
s''(x) = \frac{1}{-kx^2} > 0\,,\ \forall\,x\ne 0\,,
\]
and $s:\breve{P} = \left]0,+\infty\right[ \to \R$ can be written as
\[
s(x) = \frac{1}{k} \log (x)\,.
\]
This is the symplectic potential of a complete hyperbolic cusp
metric on $\breve{B} = \left]0,+\infty\right[ \times S^1$,
given in matrix form by
\[
\begin{bmatrix}
\frac{1}{-k x^2} & 0 \\
0 & -k x^2
\end{bmatrix}\,.  
\]

\section{Calabi's Family of Extremal K\"ahler Metrics}
\label{s:calabi}

In~\cite{C2}, Calabi introduced the notion of extremal K\"ahler 
metrics. These are defined, for a fixed closed complex manifold 
$(M,J)$, as critical points of the square of the $L^2$-norm of 
the scalar curvature, considered as a functional on the space of all 
symplectic K\"ahler forms $\omega$ in a fixed K\"ahler class 
$\Omega\in H^2(M,\R)$. The 
extremal Euler-Lagrange equation is equivalent to the gradient of the 
scalar curvature being an holomorphic vector field (see~\cite{C1}), and 
so these metrics generalize constant scalar curvature K\"ahler metrics. 
Moreover, Calabi showed in~\cite{C3} that extremal K\"ahler metrics are 
always invariant under a maximal compact subgroup of the group of 
holomorphic transformations of $(M,J)$. Hence, on a toric manifold, 
extremal K\"ahler metrics are automatically toric K\"ahler metrics, and 
one should be able to write them down using the previous action-angle
coordinates framework. 

In this section, following~\cite{Abr1}, we will do that for the $4$-parameter 
family of $U(n)$-invariant extremal K\"ahler metrics constructed by Calabi 
in~\cite{C2}. Calabi used this family to put extremal K\"ahler metrics on
\[
\CP ({\mathcal O}(-m)\oplus\C)\longrightarrow \CP^{n-1}
\,,\ n,m \in\N\,,
\]
for any K\"ahler class. In particular, when $n=2$, on all Hirzebruch 
surfaces (cf. Example~\ref{ex:hirzebruch}). As we will see here, this family 
can be used to write down many other interesting extremal K\"ahler metrics, 
including the non-compact, cohomogeneity one, constant scalar curvature examples
that were later constructed by LeBrun~\cite{LeB}, Pedersen-Poon~\cite{PePo} 
and Simanca~\cite{Si}. Using the action-angle coordinates set-up for toric Sasaki 
geometry developed in~\cite{MSY}, one can show~\cite{Abr4} that Calabi's family 
also contains a family of K\"ahler-Einstein metrics directly related 
to the Sasaki-Einstein metrics constructed in 2004 by 
Gauntlett-Martelli-Sparks-Waldram~\cite{GW1,GW2}. 

\subsection{Calabi's Family in Action-Angle Coordinates}

Consider symplectic potentials $s:\breve{P}\subset(\R^+)^n \to \R$ of the 
form
\begin{equation} \label{eq:sympot-r}
s(x) = \frac{1}{2}\left(\sum_{i=1}^n x_i \log x_i  + h(r)\right)\,,
\end{equation}
where
\[
r=x_1 + \cdots x_n
\]
and $\breve{P}$ will be determined in each of the particular cases that
we will consider. A simple computation shows that
\[
\Det (S) = \frac{1+r h''(r)}{2^n x_1 \cdot\cdots\cdot x_n}
\quad\text{and}\quad 
S^{-1} = \left(s^{ij}= 
2 \left(\delta_{ij} x_i - x_i x_j f(r)\right)\right)\,,
\]
where $f = h''/(1 + r h'')$.
It then follows from~(\ref{scalarsymp}) that the scalar curvature of
the corresponding toric K\"ahler metric is given by
\begin{equation} \label{eq:scalar-r}
Sc (x) = Sc (r) = 2r^2f''(r) + 4(n+1)rf'(r) + 2n(n+1)f(r)\,.
\end{equation}

The Euler-Lagrange equation defining an extremal K\"ahler metric 
can be shown to be equivalent to
\begin{equation} \label{extremalsymp}
\frac{{\partial}Sc}{{\partial}x_j} \equiv\ \mbox{constant},\ j=1,\ldots,n,
\end{equation}
i.e. the metric is extremal if and only if its scalar curvature $Sc$ 
is an affine function of $x$ (see~\cite{Abr1}). 

Requiring that the scalar curvature $Sc \equiv Sc(r)$, given 
by~(\ref{eq:scalar-r}), is an affine function of $r$ is easily seen 
to be equivalent to
\begin{equation} \label{eq:h}
h''(r) = - \frac{1}{r} + \frac{r^{n-1}}{r^n - A - Br - Cr^{n+1} -
Dr^{n+2}}\,,
\end{equation}
where $A,B,C,D\in\R$ are the $4$ parameters of the family.

As shown by Calabi in~\cite{C2}, one can determine explicit values
for the constants $A,B,C,D\in\R$ so that the corresponding symplectic
potential, given by~(\ref{eq:sympot-r}), gives rise to an extremal
K\"ahler metric on
\[
H^n_m := \CP ({\mathcal O}(-m)\oplus\C)\longrightarrow \CP^{n-1}
\,,\ n,m \in\N\,.
\]

In our framework, this can be seen as follows. Up to a $GL(n,\R)$
transformation, generalizing to higher dimensions the one considered
in Example~\ref{ex:hirzebruch}, $H^n_m$ is determined by a moment
polytope $P^n_m (a,b) \subset\R^n$ with defining affine functions
\[
\ell_i (x) = x_i\,,\ \forall\,i=1,\ldots,n\,,
\quad
\ell_{n+1} (x) = \frac{1}{m} (r-a)
\quad\text{and}\quad
\ell_{n+2} (x) = \frac{1}{m} (b-r)\,,
\]
where the real numbers $0<a<b$ determine the K\"ahler class, i.e.
the cohomology class of the symplectic form $\omega_{a,b}$. Hence, 
if follows from Theorem~\ref{thm:Abr2} that, to determine a toric 
compatible complex structure on $(H^n_m, \omega_{a,b})$, the 
symplectic potential has to be of the form
\begin{equation} \label{eq:sympot-P}
2s(x) = \sum_{i=1}^n x_i \log x_i  + 
\frac{1}{m} ((r-a)\log(r-a) + (b-r)\log(b-r)) + 
\tilde{h}(r)\,,
\end{equation}
where $\tilde{h}$ is smooth on $P^n_m (a,b) \subset\R^n$. 
Comparing~(\ref{eq:sympot-r}), (\ref{eq:h}) and~(\ref{eq:sympot-P}),
one concludes that we must have
\begin{equation} \label{eq:system}
\frac{r^{n-1}}{r^n - A - Br - Cr^{n+1} - Dr^{n+2}} =
\frac{1}{m}\left(\frac{1}{r-a} + \frac{1}{b-r}\right) + R(r)\,,
\end{equation}
where $R(r)$ is a smooth function on $P^n_m (a,b) \subset\R^n$.
This gives rise to a system of $4$ linear equations in the $4$ unknowns
$A,B,C,D\in\R$, which admits a unique explicit solution for any
$n,m\in\N$ and $a,b\in\R$ such that $0<a<b$ (see page~285 
of~\cite{C2} or~\cite{Ra}).

\subsection{Particular Cases}

By construction, all K\"ahler metrics in Calabi's $4$-parameter family 
are extremal. A simple computation shows that their scalar curvature is given 
by
\[
Sc(r) = 2 (n+1) ((2+n)Dr+nC)\,.
\]
Hence, these metrics have 
\[
\text{constant scalar curvature iff $D=0$} 
\]
and are
\[
\text{scalar-flat iff $C=D=0$.} 
\]
Moreover, one can show that these metrics are 
\[
\text{K\"ahler-Einstein iff $B=D=0$}
\]
and 
\[
\text{Ricci-flat iff $B=C=D=0$.}
\]

We will now analyse in more detail these constant scalar curvature
particular cases. Note that when $A=B=C=D=0$ we have
\[
s(x) = \frac{1}{2} \sum_{i=1}^n x_i \log x_i\,,
\]
which is the standard symplectic potential of the Delzant set
$P = \left(\R^+_0 \right)^n$ and determines the standard flat
Euclidean metric on $\R^{2n}$ (cf. Example~\ref{ex:symppotR2n}).

\subsection{Ricci-Flat Metrics}

Assume that $B=C=D=0$ and $A=a^n$ with $0<a\in\R$. Then
\begin{align}
h''(r) & = -\frac{1}{r} + \frac{r^{n-1}}{r^n - a^n} \notag \\
& = -\frac{1}{r} + \frac{r^{n-1}}{(r-a) \sum_{k=1}^n a^{k-1} r^{n-k}} \notag \\
& = -\frac{1}{r} + \frac{1}{n}\cdot\frac{1}{r-a} + R(r)\,,\notag
\end{align}
where $R(r)$ is a smooth function on the rational Delzant
set $P^n (a) \subset\R^n$ with defining affine functions
\[
\ell_i (x) = x_i\,,\ \forall\,i=1,\ldots,n\,,
\quad\text{and}\quad
\ell_{n+1} (x) = \frac{1}{n} (r-a)\,.
\]
The symplectic potential can be written as
\[
s(x) = \frac{1}{2}\left(\sum_{i=1}^n x_i \log x_i  + 
\frac{1}{n} (r-a)\log(r-a)  + 
\tilde{h}(r)\right)\,,
\]
where $\tilde{h}$ is smooth on $P^n (a) \subset\R^n$. 
Hence, for each $a>0$, it defines a Ricci-flat K\"ahler metric
on the total space of the canonical line bundle
\[
{\mathcal O}(-n)\longrightarrow \CP^{n-1}
\]
(as before, up to a $GL(n,\R)$ transformation, the underlying
non-compact toric symplectic manifold is determined by 
$P^n(a)\subset\R^n$). These are the metrics constructed by
Calabi in~\cite{C1}.

\subsection{Scalar-Flat Metrics}
\label{ssec:scalar-flat}

We will now show that Calabi's family also contains the complete
scalar-flat K\"ahler metrics on the total space of the line bundles
\[
{\mathcal O}(-m)\longrightarrow \CP^{n-1}\,,\ \forall\,m\in\N\,,
\]
constructed by LeBrun~\cite{LeB} and Pedersen-Poon~\cite{PePo} 
(see also Simanca~\cite{Si}).

Up to a $GL(n,\R)$ transformation, these spaces are determined
by the rational Delzant sets $P^n_m (a) \subset \R^n$, 
with $0<a\in\R$ and defining affine functions
\[
\ell_i (x) = x_i\,,\ \forall\,i=1,\ldots,n\,,
\quad\text{and}\quad
\ell_{n+1} (x) = \frac{1}{m} (r-a)\,.
\]
which means that the symplectic potential has to be of the form
\[
s(x) = \frac{1}{2}\left(\sum_{i=1}^n x_i \log x_i  + 
\frac{1}{m} (r-a)\log(r-a)  + 
\tilde{h}(r)\right)\,,
\]
where $\tilde{h}$ is smooth on $P^n_m (a)$. This implies
that
\[
h''(r) = -\frac{1}{r} + \frac{1}{m}\cdot\frac{1}{r-a} + R(r)\,,
\]
with $R(r)$ smooth on $P^n_m (a)$. Since the scalar-flat condition
is equivalent to $C=D=0$, we get that
\[
- \frac{1}{r} + \frac{r^{n-1}}{r^n - A - Br} = 
-\frac{1}{r} + \frac{1}{m}\cdot\frac{1}{r-a} + R(r)\,.
\]
This relation gives rise to a system of $2$ linear equations in the 
$2$ unknowns $A,B\in\R$, which admits
a unique solution for any $n,m\in\N$ and $0<a \in\R$:
\[
A = a^n (1-n+m) \quad\text{and}\quad B = (n-m) a^{n-1}\,.
\]

Note that when $m=1$ we get complete scalar-flat K\"ahler metrics on
the total space of the line bundle
\[
{\mathcal O}(-1)\longrightarrow \CP^{n-1}\,,
\]
i.e. on $\C^n$ blown-up at the origin. These were originally
constructed by D.~Burns (at least when $n=2$).

\subsection{Fubini-Study and Bergman Metrics}

Assume that $A=B=D=0$, which implies in particular that we are
considering K\"ahler-Einstein metrics. Then
\begin{align}
h''(r) & = -\frac{1}{r} + \frac{r^{n-1}}{r^n - C r^{n+1}} \notag \\
& = -\frac{1}{r} + \frac{1}{r (1-Cr)} \notag \\
& = \frac{1}{\frac{1}{C} - r}\,,\notag
\end{align}
which implies that the symplectic potential can be written as
\[
s(x) = \frac{1}{2}\left(\sum_{i=1}^n x_i \log x_i  + 
\left|\frac{1}{C} - r \right|\log \left|\frac{1}{C} - r \right|\right)\,.
\]

When $C=1$ we recover Example~\ref{ex:symppotPn}, i.e. the standard
complex structure and Fubini-Study metric on $\CP^n$. More generally, 
for any $C>0$, this defines the standard complex structure and suitably 
scaled Fubini-Study metric on $\CP^n$. The corresponding moment polytope 
is the simplex in $\R^n$ with defining affine functions
\[
\ell_i (x) = x_i\,,\ \forall\,i=1,\ldots,n\,,
\quad\text{and}\quad
\ell_{n+1} (x) = \frac{1}{C} - r\,.
\]

When $C<0$ it follows from Theorem~\ref{thm:Abr2} that the above
symplectic potential determines a toric compatible complex structure
$J_C$ on the toric symplectic manifold $(\R^{2n}, \omega_{\rm st})$
with Delzant set $P=(\R^+_0)^n \subset \R^n$. The corresponding
K\"ahler metric is a $U(n)$-invariant K\"ahler-Einstein metric of
negative scalar curvature on the complex manifold $(\R^{2n}, J_C)$.
Using the holomorphic coordinates given by~(\ref{eq:holcoords}),
one easily concludes that $(\R^{2n}, J_C)$ is biholomorphic to a
ball $B\subset\C^n$, which implies in particular that $J_C$ is not
complete. Moreover, the K\"ahler metric
$\langle\cdot,\cdot\rangle_C : = \omega_{\rm st} (\cdot, J_C \cdot)$
is, in fact, the well-known and complete Bergman metric.

\subsection{Other K\"ahler-Einstein Metrics}

We will now show that Calabi's family also contains the
complete K\"ahler-Einstein metrics with negative scalar 
curvature on the total space of the open disc bundles 
\[
{\mathcal D} (-m) \subset {\mathcal O}(-m)\longrightarrow \CP^{n-1}\,,\ 
\forall\,n<m\in\N\,,
\]
constructed by Pedersen-Poon~\cite{PePo}.

As toric symplectic manifolds, and up to a $GL(n,\R)$ transformation, 
these spaces are again determined by the rational Delzant sets 
$P^n_m (a) \subset \R^n$ with defining affine functions
\[
\ell_i (x) = x_i\,,\ \forall\,i=1,\ldots,n\,,
\quad\text{and}\quad
\ell_{n+1} (x) = \frac{1}{m} (r-a)\,.
\]
which means that the symplectic potential has to be of the form
\[
s(x) = \frac{1}{2}\left(\sum_{i=1}^n x_i \log x_i  + 
\frac{1}{m} (r-a)\log(r-a)  + 
\tilde{h}(r)\right)\,,
\]
where $\tilde{h}$ is smooth on $P^n_m (a)$. This implies
that
\[
h''(r) = -\frac{1}{r} + \frac{1}{m}\cdot\frac{1}{r-a} + R(r)\,,
\]
with $R(r)$ smooth on $P^n_m (a)$. Since the K\"ahler-Einstein 
condition is equivalent to $B=D=0$, we get that
\[
- \frac{1}{r} + \frac{r^{n-1}}{r^n - A - Cr^{n+1}} = 
-\frac{1}{r} + \frac{1}{m}\cdot\frac{1}{r-a} + R(r)\,.
\]
This relation gives rise to a system of $2$ linear equations in the 
$2$ unknowns $A,C\in\R$, which admits
a unique solution for any $n<m\in\N$ and $0<a \in\R$:
\[
A = \frac{(m+1) a^n}{n+1} > 0
\quad\text{and}\quad 
C = \frac{n-m}{(n+1)a} < 0\,.
\]

As remarked by Pedersen-Poon, these metrics are a superposition of
Calabi's Ricci-flat metrics ($A>0$) and Bergman metrics ($C<0$). The 
analogous superposition of Calabi's Ricci-flat metrics ($A>0$) with
Fubini-Study metrics ($C>0$) gives rise to K\"ahler-Einstein metrics on 
the projectivization of the above line bundles, with cone-like singularities
in the normal directions to the zero and infinity sections. As explained
in~\cite{Abr4}, these metrics are directly related to the smooth 
Sasaki-Einstein metrics constructed in 2004 by 
Gauntlett-Martelli-Sparks-Waldram~\cite{GW1,GW2}.

\subsection{Other Constant Scalar Curvature Metrics}

We will now show that Calabi's family also contains the complete constant 
negative scalar curvature K\"ahler metrics on the total space of the open disc 
bundles 
\[
{\mathcal D} (-m) \subset {\mathcal O}(-m)\longrightarrow \CP^{n-1}\,,\ 
\forall\,n,m\in\N\,,
\]
constructed again by Pedersen-Poon~\cite{PePo}.

As before, we are interested in the rational Delzant sets 
$P^n_m (a) \subset \R^n$ and symplectic potentials of the form
\[
s(x) = \frac{1}{2}\left(\sum_{i=1}^n x_i \log x_i  + 
\frac{1}{m} (r-a)\log(r-a)  + 
\tilde{h}(r)\right)\,,
\]
with $\tilde{h}$ smooth on $P^n_m (a)$. Assuming $D=0$ and $C=-1$, 
i.e. $Sc = - 2n(n+1)$, this implies that
\[
- \frac{1}{r} + \frac{r^{n-1}}{r^{n} - A - Br + r^{n+1}} = 
-\frac{1}{r} + \frac{1}{m}\cdot\frac{1}{r-a} + R(r)\,,
\]
with $R(r)$ smooth on $P^n_m (a)$. This relation gives rise to a system of 
$2$ linear equations in the $2$ unknowns $A,B\in\R$, which admits
a unique solution for any $n,m\in\N$ and $0<a \in\R$:
\[
A = (m-n+(1-n)a)a^n
\quad\text{and}\quad 
B = (n-m+1+na)a^{n-1}\,.
\]

\end{document}